\newtheorem{thm}{Theorem}[section]
\newtheorem{cor}[thm]{Corollary}
\newtheorem{lem}[thm]{Lemma}
\newtheorem{prop}[thm]{Proposition}
\theoremstyle{definition}
\newtheorem{defn}[thm]{Definition}
\theoremstyle{property}
\theoremstyle{remark}
\newtheorem{rem}[thm]{Remark}
\newtheorem{ex}[thm]{Example}
\numberwithin{equation}{section}
\definecolor{ceruleanblue}{rgb}{0.16, 0.32, 0.75}
\begin{document}

\title[basic Kirwan injectivity]
 {Basic Kirwan injectivity and its applications}

%---------------------------------------
\author{Yi Lin}

\address{Y. Lin \\
Department of Mathematical Sciences \\
Georgia Southern University \\
Statesboro, GA, 30460 USA}
\email{yilin@georgiasouthern.edu}
%================================
\author{Xiangdong Yang}

\address{X. Yang \\
Department of Mathematics \\
Lanzhou University \\
Lanzhou 730000, China}

\email{yangxd@lzu.edu.cn}
%--------------------------------------

\subjclass[2010]{57S25; 57R91}

\keywords{transversely symplectic foliations, basic cohomology, Kirwan injectivity.}
\date{\today}

%\dedicatory{ }

%\commby{}
%---------------------------------

\begin{abstract}
Consider the Hamiltonian action of a torus on a transversely symplectic foliation that is also Riemannian.
When the transverse hard Lefschetz property is satisfied, we establish a foliated version of the Kirwan injectivity theorem, and use
it to study Hamiltonian torus actions on transversely K\"ahler foliations.
Among other things, we prove a foliated analogue of the Carrell--Liberman theorem.
As an application, this confirms a conjecture raised by Battaglia--Zaffran on the basic Hodge numbers of symplectic toric quasifolds.
Our methods also allow us to present a symplectic approach to the calculation of the Betti numbers of symplectic toric quasifolds as diffeological spaces.
\end{abstract}

%-----------------------------------------------------------
\maketitle
%-----------------------------------------------------------
%\tableofcontents
\section{Introduction}

Reinhart \cite{Re59} introduced the notion of \emph{basic cohomology} as a cohomology theory for the leaf space of a foliation.
It turns out to be a very useful tool in the study of Riemannian foliations.
Killing foliation is an important class of Riemannian foliations, and is known to possess a type of \lq\lq \emph{internal}" symmetry given by the transverse action of their structural Lie algebras.
In order to study this important type of symmetries, Goertsches--T\"{o}ben \cite{GT10} proposed the notion of \emph{equivariant basic cohomology},
and established a Borel type localization theorem for the transverse actions of structural Lie algebras on Killing foliations.
Lin--Sjamaar  \cite{LS18} generalized their result to the transverse isometric action of an arbitrary Lie algebra on a general Riemannian foliation.

In a different direction,  Lin--Sjamaar \cite{LS17} considered Hamiltonian action of a compact Lie group on a transversely symplectic foliation.
They discovered that when the action is \emph{clean},  components of a moment map must be Morse--Bott functions, and extended Atiyah--Guillemin--Sternberg--Kirwan convexity theorem to clean Hamiltonian actions.
Lin--Yang \cite{LY17} studied Hamiltonian actions on a transversely symplectic foliation from Hodge theoretic viewpoint, and established in this setup the equivariant formality result for the equivariant basic cohomology.

In this paper, we set ourselves a modest goal of applying the symplectic methods developed in \cite{LS17}, \cite{LY17} and \cite{LS18} to derive some interesting applications in the geometry and topology of foliations.
We first present a quick proof of a foliated version of the Kirwan injectivity theorem for transversely symplectic foliations (Theorem \ref{thm1}),
then move on to use it to study Hamiltonian torus actions on transversely symplectic and K\"{a}hler foliations.
When the action preserves the transversely holomorphic structure, we obtain a foliation analogue of the Carrell--Lieberman theorem \cite{CL73} and establish the vanishing of certain basic Hodge numbers (Theorem \ref{thm2}).
When the Hamiltonian action is clean and the fixed leaf set consists entirely of isolated closed leaves, we prove that the odd dimensional basic Betti number must vanish, and that the basic Euler characteristic number is given by the total number of the vertices in the moment polytope.

Out of any non-rational simple polytope,  Prato \cite[Section 3]{Pr01} proposed a variation of the Delzant construction and constructed a generalized version of a symplectic toric manifold, called a symplectic toric quasifold.
Subsequently, Battaglia--Prato \cite{BP01} introduced complex toric quasifolds for simplicial polytopal fans, and clarified the connections between the symplectic and complex approaches to toric quasifolds.
Since then, much research has been done to understand the cohomology of a toric quasifold. Battaglia \cite{Ba11} first determined the betti numbers of the De Rham cohomology of a toric quasifold.
Battaglia--Zaffran \cite{BZ15} noted that complex toric quasifolds could be realized as the leaf spaces of holomorphic foliations on compact LVMB complex manifolds, and extended Battaglia's result in the framework of basic Betti numbers for foliations.
In a follow up work \cite{BZ17}, Battaglia--Zaffran also explained in details the relation between LVM manifolds and complex and symplectic toric quasifolds. These methods and results were further developed in \cite{Ish18} and \cite{IKP18}.
Among other things, Ishida--Krutowiski--Panov \cite{IKP18} computed the basic cohomology ring of any complex moment-angle manifold.

From our viewpoint, Prato's toric quasifolds provide a rich and interesting class of Hamiltonian torus actions on transversely K\"ahler foliations to which our general results apply very well. Indeed, our foliation analogue of the Carrell--Liberman theorem provides a positive answer to a conjecture raised by Battaglia--Zaffran \cite{BZ15},  which asserts that the basic Hodge numbers of toric quasifolds are concentrated on the main diagonal.
As an aside, we also present a purely symplectic method of computing the betti numbers of a symplectic toric quasifold as a diffeological space, c.f. \cite{I13}.
Our method appears to be new even for symplectic toric manifolds (cf. \cite[Theorem I.3.6]{Ca03}). It relies on the extensions of two important facts in equivariant symplectic geometry to the realm of Hamiltonian actions on transversely symplectic foliations: the uniqueness of the minimal critical submanifold of a component of a moment map, and the slice theorem for Hamiltonian torus actions.
It may also be noteworthy that our main results apply to a wider class of transversely K\"ahler foliations which may not admit a large symmetry from a torus of greatest possible dimension.

The rest of this paper is organized as follows.
Section \ref{sec2} recalls the definitions of transverse geometric structures on foliations. Section \ref{sec3} reviews the equivariant basic cohomology and establishes the Kirwan injectivity theorem.
Section \ref{sec4} proves some useful facts on clean Hamiltonian torus actions. Section \ref{general-theory} proves an analogue of the Carrell--Liberman theorem in a foliated setting, as well as a result on the basic Betti numbers and basic Euler characteristic numbers of transversely symplectic foliations with Hamiltonian torus actions.
Section \ref{quasifolds} applies the main results to determine the basic Hodge numbers and basic Betti numbers of  symplectic toric quasifolds.

%=========================================
%=========================================

\section{Transverse geometric structures on foliations}\label{sec2}

Let $\mathcal{F}$ be a foliation on a smooth manifold $M$, and let $T\mathcal{F}$ be the tangent bundle of the foliation.
Throughout this paper we denote by
$\mathfrak{X}(\mathcal{F})\subset\mathfrak{X}(M)$ the subspace of vector fields tangent to the leaves of $\mathcal{F}$.
We say that a vector field $X\in\mathfrak{X}(M)$ is \emph{foliate}, if $[X,Y]\in\mathfrak{X}(\mathcal{F})$ for all $Y\in \mathfrak{X}(\mathcal{F})$.
We will denote by
$\mathfrak{R}(\mathcal{F})$ the space of foliate vector fields on $(M,\mathcal{F})$.
Clearly we have that
$\mathfrak{X}(\mathcal{F}) \subset \mathfrak{R}(\mathcal{F})$.
In this context,  a \emph{transverse vector field} is an equivalence class in the quotient space $\mathfrak{R}(\mathcal{F})/\mathfrak{X}(\mathcal{F})$. The space of transverse vector fields, denoted by $\mathfrak{X}(M, \mathcal{F})$, forms a Lie algebra with a Lie bracket inherited from the natural one on $\mathfrak{R}(\mathcal{F})$.
The space of \emph{basic forms} on $M$ is defined to be
$$
\Omega(M,\mathcal{F})=
\bigl\{\alpha\in\Omega(M)\,|\,
\iota(X)\alpha=\mathcal{L}(X)\alpha=0,\,
\mathrm{for}\,
\,X\in\mathfrak{X}(\mathcal{F})\bigr\}.
$$
Since the exterior differential operator $d$ preserves basic forms, we obtain a sub-complex  $\{\Omega^{*}(M,\mathcal{F}),d\}$ of the usual de Rham complex, called the \emph{basic de Rham complex}.
The associated cohomology $H^{*}(M,\mathcal{F})$
is called the \emph{basic cohomology}.

\begin{defn}
A \emph{transverse Riemannian metric} on a foliation $(M,\mathcal{F})$ is a Riemannian metric $g$ on the normal bundle $N=TM/T\mathcal{F}$ of the foliation, such that $\mathcal{L}(X)g=0$,
for any $X\in\mathfrak{X}(\mathcal{F})$.
We say that $\mathcal{F}$ is a Riemannian foliation if there exists a transverse Riemannian metric on $(M,\mathcal{F})$.
Let $q$ be the co-dimension of a Riemannian foliation $\mathcal{F}$ on a compact manifold $M$.
We say that $\mathcal{F}$ is \emph{taut} if $H^{q}(M,\mathcal{F})=\mathbb{R}$, .
\end{defn}

\begin{defn}\label{foliate-bundle} Let $(M, \mathcal{F})$ be a foliated manifold. A $k$ dimensional vector bundle $\pi: E\rightarrow M$ is said to be foliated, if
it is equipped with a foliation $\tilde{\mathcal{F}}$, of the same dimension as $\mathcal{F}$,  whose leaves are transversal to the fibers of $\pi$ and mapped by $\pi$ to those of $\mathcal{F}$. When this is the case, a section $s: M\rightarrow E$ is said to be a \emph{foliated section}, if it maps a leaf of $\mathcal{F}$ in $M$ into a leaf of $\tilde{\mathcal{F}}$ in $E$. \end{defn}

It is well-known that the normal bundle $N$ of the foliation $\mathcal{F}$ is an important example of a foliated vector bundle, which is naturally equipped with a foliation $\tilde{\mathcal{F}}$ induced by the Bott connection. As a result, the dual vector bundle $N^*$ of $N$, and the determinant line bundle $L=\wedge^q N^*$ of $N^*$, are also foliated vector bundles over $M$, where $q=\text{codim} (\mathcal{F})$. Clearly, there is a natural one-to-one correspondence between the space of foliated sections of $L^*$ and the space of basic forms of top degrees on $M$. We say that $(M, \mathcal{F})$ is \emph{transversely orientable}, if there is a nowhere vanishing foliated section of $L$, or equivalently, there is a top degree no-where vanishing basic form on $M$.
A \emph{transverse orientation} is an equivalence class of the following equivalence relations on the space of  top degree basic forms: $\alpha\sim \beta, \text{ if}\,  \alpha=f \beta$ for some positive basic function $f$ on $M$.

If $(M, \mathcal{F})$ is equipped with a transverse Riemannian metric $g$, then $g$ induces a point-wise Riemannian norm on the space of basic forms of top degrees. In this context, a \emph{transverse volume form} is a no-where vanishing basic form of top degree whose induced point-wise Riemannian norm equals one everywhere.  A transverse volume form for a transverse Riemannian metric $g$ on $(M,\mathcal{F})$ exists if and only if $(M, \mathcal{F})$ is transversely orientable. Moreover, when
$(M, \mathcal{F})$ is transversely orientable, then a given transverse orientation class on $(M, \mathcal{F})$ contains  a unique transverse volume.

Let $(M,\mathcal{F})$ be a Riemannian foliation with a transverse metric $g$, and let $\overline{X}$ be a transverse vector field.
Define
$\mathcal{L}(\overline{X})g=\mathcal{L}(X)g$,
where $X$ is a foliate vector field that represents $\overline{X}$.
It is straightforward to check that this definition does not depend on the choice of a foliate vector representing $\overline{X}$.
A transverse vector field $\overline{X}$ is said to be \emph{transversely Killing} if $\mathcal{L}(\overline{X})g=0$.
Suppose that both $\overline{X}$ and $\overline{Y}$ are transversely Killing.
Then it follows easily from the Cartan identities that $[\overline{X},\overline{Y}]$ is also transversely Killing.
In other words, the space of transversely Killing vector fields, which we denote by $\text{Iso}(M, \mathcal{F})$, forms a Lie subalgebra of $\mathfrak{X}(M, \mathcal{F})$.

\begin{defn}
A  \emph{transverse almost complex structure} $\mathcal{J}$ on $(M,\mathcal{F})$ is an almost complex structure
$\mathcal{J}: TM/T\mathcal{F}\rightarrow TM/T\mathcal{F}$
such that
$\mathcal{L}(X)\mathcal{J}=0$,
for any $X\in \mathfrak{X}(\mathcal{F})$.
A  transverse almost complex structure $\mathcal{J}$ on $(M,\mathcal{F})$ is said to be \emph{integrable}, if
for every $p\in M$, there exists an open neighborhood $U$ of $p$, such that for any two transverse vector fields $\overline{X}$ and $\overline{Y}$  on $U$ with respect to the foliation $\mathcal{F}\vert_U$, the Nijenhaus tensor
$$
N_{\mathcal{J}}(\overline{X},\overline{Y})=
[\mathcal{J}\overline{X},\mathcal{J}\overline{Y}]
-\mathcal{J}[\mathcal{J}\overline{X},\overline{Y}]
-\mathcal{J}[\overline{X}, \mathcal{J}\overline{Y}]-[\overline{X},\overline{Y}]
$$
vanishes.
\end{defn}

An integrable transverse almost complex structure is also called a \emph{transverse complex structure}.
The foliation $\mathcal{F}$ is said to be \emph{transversely holomorphic} if there is a transverse complex structure $\mathcal{J}$ on $(M, \mathcal{F})$.

\begin{defn}
Let $\mathcal{F}$ be a foliation on a smooth manifold $M$.
We say that $\mathcal{F}$ is a \emph{transversely symplectic foliation},
if there exists a closed 2-form $\omega$, called a \emph{transversely symplectic form},
such that for each $x\in M$, the kernel of $\omega_x$ coincides with $T_x\mathcal{F}$.
\end{defn}

\begin{defn}\label{kahler-foliation}
A \emph{transversely K\"{a}hler structure} on $(M,\mathcal{F})$ consists of a transverse complex structure $\mathcal{J}$ and a transverse Riemannian metric $g$,  such that the tensor field $\omega$ defined by $\omega(X,Y)=g(X,\mathcal{J}Y)$ is transversely symplectic when considered as a 2-form on $M$ given by the injection $\wedge^{2}N^*\rightarrow\wedge^{2}T^{*}M$, where $N^*$ is the dual of the normal bundle of the foliation.
The $2$-form $\omega$  will be called a \emph{transversely K\"{a}hler form}.
We say that $\mathcal{F}$ is a transversely K\"ahler foliation if there exists a transversely K\"ahler structure on $(M,\mathcal{J})$.
\end{defn}
A submanifold $X$ of a foliated manifold $(M, \mathcal{F})$ is said to be \emph{saturated}, if any leaves of $\mathcal{F}$ that intersect $X$ non-trivially must be contained in $X$.
Let $(g, \mathcal{J})$ be a transversely K\"ahler structure on $(M, \mathcal{F})$ and $X$ a saturated submanifold of $(M, \mathcal{F})$.
We say that $X$ is a \emph{transversely K\"ahler submanifold} if $TX/T(\mathcal{F}\vert_{X})$, the normal bundle of the restricted foliation $\mathcal{F}\vert_X$, is invariant under the transverse complex structure $\mathcal{J}$.

\begin{lem}\label{kahler-form} Let $(M, \mathcal{F})$ be a transversely K\"ahler foliation, let $X$ be a transversely K\"ahler  submanifold of $M$ such that the restricted foliation $\mathcal{F}\vert_X$ has a co-dimension $2k$ in $X$,  and let $\omega$ be the transversely K\"ahler form on $(M, \mathcal{F})$ as given in Definition \ref{kahler-foliation}.
Then $\omega_X=\omega\vert_X$ is a transversely K\"ahler form on $X$. Moreover, if we denote by $\nu_X$ the transverse volume form on $X$ determined by the transverse orientation of $\omega_X^k$, then
\[ \omega_{X}^k=\frac{1}{k!}\nu_X.\]
\end{lem}
\begin{proof}
The standard argument in K\"ahler geometry (for the case of a point foliation) extends to the present situation without any essential changes.
We refer to \cite[Page 31]{GH78} for details.
\end{proof}

Applying the basic Hodge theory developed in \cite{KA90}, the following foliated version of the $\bar{\partial}\partial$-lemma
was shown in \cite[Lemma 1]{CW91}.
\begin{thm}\label{basic-ddbar-lemma}
Suppose that $\mathcal{F}$ is a taut transversely K\"ahler foliation on a compact manifold $M$.
Then on the space of basic forms $\Omega(M,\mathcal{F})$ the following $\bar{\partial}\partial$-lemma holds.
\[\ker\,\bar{\partial} \cap \mathrm{im}\,\partial=\mathrm{im}\, \bar{\partial}\cap \ker\,\partial=\mathrm{im}\,\bar{\partial}\partial.\]
\end{thm}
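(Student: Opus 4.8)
The plan is to adapt the classical Hodge-theoretic proof of the $\partial\bar\partial$-lemma on a compact K\"ahler manifold to the basic (transverse) setting, using the basic Hodge theory of El Kacimi-Alaoui \cite{KA90}. The classical argument is purely formal once three ingredients are in place: (i) an orthogonal Hodge decomposition of each space of basic forms with respect to the relevant basic Laplacian; (ii) the coincidence of basic Laplacians $\Delta_d = 2\Delta_\partial = 2\Delta_{\bar\partial}$; and (iii) the K\"ahler commutation identities, which in particular yield the anticommutation $\partial\bar\partial^* = -\bar\partial^*\partial$ and the commutation of the Green's operators with $\partial,\bar\partial,\partial^*,\bar\partial^*$. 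I would therefore first install this machinery on the basic de Rham complex and then run the standard deduction.

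First I would set up the transverse Hermitian structure. The transverse complex structure $\mathcal{J}$ of Definition \ref{kahler-foliation} induces a bigrading $\Omega(M,\mathcal{F})\otimes\mathbb{C}=\bigoplus_{p,q}\Omega^{p,q}(M,\mathcal{F})$ on complexified basic forms, and integrability of $\mathcal{J}$ is precisely what forces the basic differential to split as $d=\partial+\bar\partial$ with $\partial\colon\Omega^{p,q}\to\Omega^{p+1,q}$ and $\bar\partial\colon\Omega^{p,q}\to\Omega^{p,q+1}$. A bundle-like metric adapted to the transverse K\"ahler metric, together with a transverse volume form, furnishes an $L^2$-inner product on basic forms and hence the formal adjoints $\partial^*,\bar\partial^*$ and the basic Laplacians. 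The delicate point, and where the hypotheses genuinely enter, is the basic Hodge decomposition: unlike on a compact manifold, integration by parts on basic forms produces a contribution governed by the mean curvature of the foliation, so the naive basic Laplacian need not be self-adjoint. Homological orientability of $\mathcal{F}$ --- equivalently, by Lemma \ref{molino-sheaf}, the existence of a nowhere-vanishing global section of the top wedge of the Molino sheaf, and hence tautness in the Riemannian case --- permits a choice of bundle-like metric for which the mean curvature form is basic and harmonic, so that the basic Laplacian becomes transversely elliptic and the analysis of \cite{KA90} applies. This yields finite-dimensional spaces of basic harmonic forms, Green's operators, and orthogonal decompositions such as $\Omega^{p,q}(M,\mathcal{F})=\mathcal{H}^{p,q}\oplus\mathrm{im}\,\bar\partial\oplus\mathrm{im}\,\bar\partial^*$, together with its $\partial$-analogue. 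The transverse K\"ahler identities then follow from the usual pointwise computation on a foliated chart, where the transverse geometry is honestly K\"ahler, giving both $\Delta_d=2\Delta_\partial=2\Delta_{\bar\partial}$ on basic forms and the anticommutation relation above.

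With these tools the lemma is formal. To prove $\ker\bar\partial\cap\mathrm{im}\,\partial\subseteq\mathrm{im}\,\bar\partial\partial$, take $\alpha=\partial\beta$ with $\bar\partial\alpha=0$ and Hodge-decompose $\alpha$ with respect to $\Delta_{\bar\partial}$:
\[\alpha=\mathcal{H}\alpha+\bar\partial\bar\partial^* G\alpha+\bar\partial^*\bar\partial G\alpha,\qquad G=G_{\bar\partial}.\]
Since $\bar\partial\alpha=0$ and $G$ commutes with $\bar\partial$, the last term vanishes. Because $\alpha$ is $\partial$-exact and $\Delta_\partial=\Delta_{\bar\partial}$ forces the harmonic spaces to coincide, $\mathcal{H}\alpha$ is orthogonal to $\alpha$, so $\mathcal{H}\alpha=0$; thus $\alpha=\bar\partial\bar\partial^* G\alpha$. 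Now $\bar\partial^* G\alpha=\bar\partial^* G\partial\beta=\bar\partial^*\partial G\beta=-\partial\bar\partial^* G\beta$ using the commutation of $G$ with $\partial$ and the anticommutation $\bar\partial^*\partial=-\partial\bar\partial^*$. Substituting gives $\alpha=-\bar\partial\partial(\bar\partial^* G\beta)\in\mathrm{im}\,\bar\partial\partial$. The inclusions $\mathrm{im}\,\bar\partial\partial\subseteq\ker\bar\partial\cap\mathrm{im}\,\partial$ and $\mathrm{im}\,\bar\partial\partial\subseteq\mathrm{im}\,\bar\partial\cap\ker\partial$ are immediate, and the symmetric argument with $\partial$ and $\bar\partial$ interchanged establishes $\mathrm{im}\,\bar\partial\cap\ker\partial\subseteq\mathrm{im}\,\bar\partial\partial$, completing the chain of equalities.

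The main obstacle is the second step: erecting the basic Hodge theory and guaranteeing that the basic Laplacian is self-adjoint and transversely elliptic. This is exactly where homological orientability is indispensable and where the genuinely foliated transverse-elliptic analysis of \cite{KA90} --- as opposed to the formal algebra of the K\"ahler package --- does the real work. Once that analytic input is granted, the $\partial\bar\partial$-lemma reduces to bookkeeping identical to the compact K\"ahler case, as carried out in \cite{CW91}.
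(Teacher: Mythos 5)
Your proposal is correct and follows essentially the same route as the paper's source for this statement: the paper gives no proof of its own but cites \cite[Lemma 1]{CW91}, which derives the basic $\bar{\partial}\partial$-lemma exactly as you do, by installing El Kacimi-Alaoui's basic Hodge theory (where homological orientability/tautness is the key hypothesis making the basic Laplacian well-behaved) and then running the formal K\"ahler-package argument. Your identification of the transverse Hodge decomposition and the coincidence of basic Laplacians as the genuine analytic content, with the rest being bookkeeping, matches the structure of the cited proof.
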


%==================================================
\section{Kirwan injectivity theorem for the equivariant basic cohomology}\label{sec3}

In this section, we prove a foliated version of the Kirwan injectivity theorem.
We begin with a review of the notion of transverse Lie algebra actions, and the associated notion of equivariant basic cohomology.

\begin{defn}
A \emph{transverse action} of a Lie algebra $\mathfrak{g}$ on a foliated manifold $(M,\mathcal{F})$ is a Lie algebra homomorphism
\begin{equation}\label{equ1.1}
\mathfrak{g}\longrightarrow\mathfrak{X}(M, \mathcal{F}).
\end{equation}
A transverse action of $\mathfrak{g}$ on $(M,\mathcal{F})$ is said to be isometric, if the image of the map (\ref{equ1.1}) lies inside $\text{Iso}(M, \mathcal{F})$.
\end{defn}

Suppose that there is a transverse action of a Lie algebra $\mathfrak{g}$ on a foliated manifold $(M,\mathcal{F})$.
For all $\xi\in \mathfrak{g}$, we will denote by $\overline{\xi}_M$ the transverse vector that is the image of $\xi$ under (\ref{equ1.1}), and by $\xi_M$ the foliate vector that represents $\overline{\xi}_M$.
For $\alpha \in \Omega(M,\mathcal{F})$, define
\[ \iota(\xi)\alpha =\iota(\xi_M)\alpha,\,\,\,\,
\mathcal{L}(\xi)\alpha=\mathcal{L}(\xi_M)\alpha.\]
Since $\alpha$ is basic, the contraction and Lie derivative operations defined above do not depend on the choices of representatives of the transverse vector field $\overline{\xi}_M$.
Goertsches--T\"{o}ben \cite[Proposition 3.12]{GT10} observed that they obey the usual rules of Cartan's differential calculus, namely $[\mathcal{L}(\xi),\mathcal{L}(\eta)]
=\mathcal{L}([\xi,\eta])$ etc.
To put it another way, a transverse $\mathfrak{g}$-action equips the basic de Rham complex $\Omega(M,\mathcal{F})$ with the structure of a $\mathfrak{g}^{\star}$-algebra in the sense of  \cite[Chapter 2]{GS99}.
Therefore there is a well-defined Cartan model
of the $\mathfrak{g}^{\star}$-algebra $\Omega(M,\mathcal{F})$ given by
\[
\Omega_{\mathfrak{g}}(M,\mathcal{F}):=
[S\mathfrak{g}^* \otimes \Omega(M,\mathcal{F})]^{\mathfrak{g}}.
\]
An element of $\Omega_{\mathfrak{g}}(M,\mathcal{F})$ can be naturally identified with an equivariant polynomial map from
$\mathfrak{g}$ to $\Omega(M,\mathcal{F})$, and is called an \emph{equivariant basic differential form}.

The equivariant basic Cartan complex has a bigrading given by
\[ \Omega_{\mathfrak{g}}^{i,j}(M,\mathcal{F})=
[S^i\mathfrak{g}^*\otimes \Omega^{j-i}(M,\mathcal{F})]^{\mathfrak{g}};
\]
moreover, it is equipped with the vertical differential $1\otimes d$, which we abbreviate to $d$, and the horizontal differential
$d'$, which is defined by
\[
(d'\alpha)(\xi)=-\iota(\xi)\alpha(\xi), \,\,\,
\forall\, \xi\in\mathfrak{g}.
\]
As a single complex, $\Omega_{\mathfrak{g}}(M,\mathcal{F})$ has a grading given by
\[
\Omega^k_{\mathfrak{g}}(M,\mathcal{F})=
\displaystyle \bigoplus_{i+j=k}
\Omega_{\mathfrak{g}}^{i,j}(M,\mathcal{F}),
\]
and a total differential $d_{\mathfrak{g}}=d+d'$, which is called the equivariant exterior differential.
The equivariant basic de Rham cohomology $H_{\mathfrak{g}}(M,\mathcal{F})$ of the transverse $\mathfrak{g}$-action on $(M,\mathcal{F})$ is defined to be the total cohomology of the Cartan complex $\{\Omega_{\mathfrak{g}}(M,\mathcal{F}), d_{\mathfrak{g}}\}$.

Now let $G$ be a compact connected Lie group with Lie algebra $\mathfrak{g}$.
A transverse action of $G$ on a foliated manifold $(M,\mathcal{F})$ is simply a transverse action of its Lie algebra $\mathfrak{g}$ on $(M,\mathcal{F})$.
The equivariant basic cohomology  of a transverse $G$-action on $(M,\mathcal{F})$ is defined to be
\[
H_G(M,\mathcal{F}):= H_{\mathfrak{g}}(M,\mathcal{F}).
\]

We say that the action of a Lie group $G$ on a foliated manifold $(M,\mathcal{F})$ is \emph{foliate}, if the action preserves the foliation structure.
Suppose that there is a foliate $G$-action on a foliated manifold $(M,\mathcal{F})$.
Then we have the following commutative diagram
$$
 \xymatrix{
  \mathfrak{g} \ar[dr]_{} \ar[r]^{}
                & \mathfrak{R}(\mathcal{F})
                \ar[d]^{\mathrm{pr}}  \\
                & \mathfrak{X}(M, \mathcal{F}).             }
$$
Here the horizontal map is induced by the infinitesimal action of $\mathfrak{g}$ on $M$, and the vertical map is the natural projection.
Thus a foliate action of $G$ naturally induces a transverse action of $G$.
As transverse vector fields are not genuine vector fields on $M$, the converse may not be true.

\begin{defn} (\cite{LS18})
Consider the action of a Lie group $G$ with Lie algebra $\mathfrak{g}$ on a foliated manifold $M$ with a transversely symplectic foliation $(\mathcal{F},\omega)$.
We say that the action of $G$ is \emph{Hamiltonian},
if there exists a $G$-equivariant map $\Phi:M\rightarrow\mathfrak{g}^{*}$,
called the moment map, such that
\begin{equation*}\label{ham-action}
\iota(\xi_{M})\omega=d\langle \Phi,\xi\rangle,\,\,\,\mathrm{for}\,\,\mathrm{all}\,\,\xi\in\mathfrak{g}.
\end{equation*}
Here $\xi_{M}$ is the fundamental vector field on $M$ generated by $\xi$, and $\langle\cdot,\cdot\rangle$ denotes the dual pairing between $\mathfrak{g}^*$ and $\mathfrak{g}$.
It is easy to check that a Hamiltonian action must automatically be foliate.
\end{defn}

\begin{defn}\label{def5.3}
A foliate $G$-action on $(M,\mathcal{F},\mathcal{J})$ is \emph{holomorphic},
if the induced $G$-action on the normal bundle of the foliation $TM/T\mathcal{F}$ preserves the transverse complex structure $\mathcal{J}$.
\end{defn}

We recall the following equivariant formality result proved in \cite[Theorem 1.1]{LY17}.

\begin{thm}[Equivariant Formality {\cite[Theorem 1.1]{LY17}}]\label{eq-formality}
Consider the Hamiltonian action of a compact group $G$ on a transversely symplectic foliation $(\mathcal{F}, \omega)$ over a compact manifold $M$.
Suppose that $(M,\mathcal{F},\omega)$ satisfies the transverse hard Lefschetz property.
Then there is a canonical $(S\mathfrak{g}^{*})^G$-module isomorphism from the equivariant basic cohomology $H_{G}(M,\mathcal{F})$ to
$
\left(S\mathfrak{g}^{*}\right)^G\otimes H(M,\mathcal{F}).
$
\end{thm}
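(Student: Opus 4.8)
The plan is to prove the asserted isomorphism by analysing the spectral sequence of the Cartan complex $\Omega_{\mathfrak g}(M,\mathcal F)$ associated to its filtration by polynomial degree in $S\mathfrak g^*$, and to show that this spectral sequence degenerates at the first page; such degeneration is precisely what yields an isomorphism of $(S\mathfrak g^*)^G$-modules. Since $G$ is a compact torus, averaging lets me work with $\mathfrak g$-invariant basic forms, the coefficient ring is $(S\mathfrak g^*)^G=S\mathfrak g^*$, and the connected group $G$ acts trivially on $H(M,\mathcal F)$; hence the first page is $E_1\cong S\mathfrak g^*\otimes H(M,\mathcal F)$ with vertical differential induced by $d$. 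Because the horizontal differential $d'$ is $S\mathfrak g^*$-linear, so is every $d_r$, and $d_1$ is completely determined by the contraction map $\bar\iota\colon H^j(M,\mathcal F)\to H^{j-1}(M,\mathcal F)$ induced by $\iota(\xi_M)$. Thus the theorem reduces to showing that every differential of this $S\mathfrak g^*$-linear spectral sequence vanishes.

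The first key step is $d_1=0$, i.e. $\bar\iota=0$. For a $d$-closed $G$-invariant basic form $\alpha$ one has $d\,\iota(\xi_M)\alpha=\mathcal L(\xi_M)\alpha=0$, so $\iota(\xi_M)\alpha$ is again closed and $\bar\iota$ is well defined. The moment map identity $\iota(\xi_M)\omega=d\langle\Phi,\xi\rangle$ together with the derivation property of contraction gives the commutator relation $[\iota(\xi_M),L]=d\langle\Phi,\xi\rangle\wedge(\cdot)$, where $L$ is wedging with the transversely symplectic form $\omega$. On closed forms the right-hand side is exact, since $\langle\Phi,\xi\rangle$ is a basic function, so $\bar\iota$ commutes with the Lefschetz operator $L$ on $H(M,\mathcal F)$. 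Now I invoke the transverse hard Lefschetz property: it equips $H(M,\mathcal F)$ with a Lefschetz $\mathfrak{sl}_2$-action whose raising operator is $L$. A degree-lowering operator commuting with $L$ must annihilate every primitive class, for if $\beta$ is primitive of degree $q-k$ then $L^{k+1}\bar\iota\beta=\bar\iota L^{k+1}\beta=0$, and the injectivity properties of the Lefschetz decomposition in degree $q-k-1$ force $\bar\iota\beta=0$; since the primitives generate the cohomology over $L$ and $\bar\iota$ commutes with $L$, this gives $\bar\iota=0$ and hence $d_1=0$.

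To propagate this to the higher pages I use the equivariant extension $\omega_{\mathfrak g}=\omega+\Phi$ of the transversely symplectic form, which is $d_{\mathfrak g}$-closed by the moment map condition and therefore acts on every page $E_r$ by an operator $\bar L_r$ commuting with $d_r$. Filtering by polynomial degree, $\bar L_r$ has leading term $1\otimes L$, the correction coming from multiplication by $\Phi$ strictly raising the polynomial degree; comparing the polynomial-degree-homogeneous components of the relation $[\bar L_r,d_r]=0$ isolates the identity $[1\otimes L,d_r]=0$. Thus each $S\mathfrak g^*$-linear differential $d_r$ commutes with $L$ on the cohomology factor, and exactly the $\mathfrak{sl}_2$ argument used for $d_1$ forces $d_r=0$. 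An induction on $r$, using that $E_r=S\mathfrak g^*\otimes H(M,\mathcal F)$ once all lower differentials vanish, then yields degeneration at $E_1$.

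The main obstacle is this passage to higher differentials: the genuinely nonequivariant operator $1\otimes L$ does not commute with $d_{\mathfrak g}$, yet one must show it commutes with each page differential $d_r$. This is exactly where the equivariant symplectic form and the polynomial-degree bookkeeping are essential, converting the equivariantly closed operator $\bar L_r$ into control over the ordinary Lefschetz operator page by page. Once degeneration is established, the induced filtration on $H_{\mathfrak g}(M,\mathcal F)$ splits compatibly with the Lefschetz decomposition, producing the canonical $(S\mathfrak g^*)^G$-module isomorphism $H_G(M,\mathcal F)\cong(S\mathfrak g^*)^G\otimes H(M,\mathcal F)$ asserted in the statement.
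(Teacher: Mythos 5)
This theorem is not proved in the present paper: it is imported verbatim from \cite[Theorem 1.1]{LY17}, so there is no internal proof to compare against. Judged on its own merits, your argument is the standard Ginzburg-style proof that hard Lefschetz forces equivariant formality, and its essential steps are sound: the filtration of the Cartan model by polynomial degree, the identification $E_1\cong (S\mathfrak g^*)^{\mathfrak g}\otimes H(M,\mathcal F)$, the commutator identity $[\iota(\xi_M),L]=e(d\langle\Phi,\xi\rangle)$ showing that the induced contraction commutes with $L$ on basic cohomology, and the primitive-decomposition argument killing any $S\mathfrak g^*$-linear, $L$-commuting operator that lowers form degree by an odd amount. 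This last point is worth making explicit for the higher pages: $d_r$ lowers form degree by $2r-1$, so for a primitive class $\beta\in H^{n-k}$ one needs injectivity of $L^{k+1}$ on $H^{n-(k+2r-1)}$, which follows from the hard Lefschetz isomorphism $L^{k+2r-1}$ in that degree; your phrase \lq\lq exactly the $\mathfrak{sl}_2$ argument used for $d_1$'' is correct but hides this degree shift. Two smaller remarks. First, your step \lq\lq comparing the polynomial-degree-homogeneous components of $[\bar L_r,d_r]=0$'' is an unnecessary detour: since multiplication by the equivariantly closed form $\omega_{\mathfrak g}=\omega+\Phi$ is a filtration-preserving chain map whose associated graded is $1\otimes L$ (because $e(\Phi)$ strictly raises the filtration degree), general functoriality of the spectral sequence already says that the operator induced on every page $E_r$, $r\ge 1$, \emph{is} $1\otimes L$ and commutes with $d_r$; no component-by-component comparison is needed. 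Second, you silently specialize to $G$ a torus, whereas the statement is for a compact group $G$; the same proof works in general once one observes that connectedness of $G$ (or the homotopy formula $\mathcal L(\xi)=d\iota(\xi)+\iota(\xi)d$) makes the $\mathfrak g$-action on $H(M,\mathcal F)$ trivial, so that $E_1=(S\mathfrak g^*)^G\otimes H(M,\mathcal F)$. Finally, passing from degeneration at $E_1$ to the asserted module isomorphism requires the standard observation that a filtered $(S\mathfrak g^*)^G$-module, with filtration bounded in each total degree and free associated graded, is itself free and non-canonically splits; the canonical map in the statement is the one induced by the surjection $H_G(M,\mathcal F)\to H(M,\mathcal F)$ together with the module structure. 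None of these points is a genuine gap.
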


As a direct application of Theorem \ref{eq-formality} and the Borel localization result  in \cite{LS18}, we establish the following foliated version of the Kirwan injectivity theorem in symplectic geometry.

\begin{thm}\label{thm1}
Let  $(M, \mathcal{F})$ be a transversely symplectic foliation on a closed manifold $M$ that is also Riemannian.
Suppose that $(M, \mathcal{F})$ satisfies the transverse hard Lefschetz property, that there is a Hamiltonian action of a compact torus $G$ on $M$.
Let $X$ be the fixed-leaf set of $M$
and $i: X\hookrightarrow M$ the inclusion map.
Then the localization homomorphism in equivariant basic cohomology
\[ i^*: H_G(M,\mathcal{F})\longrightarrow H_G(X,\mathcal{F})\]
is injective.
\end{thm}
\begin{proof}
By assumption, there is a transverse Riemannian metric $g$ on $(M,\mathcal{F})$.
By averaging over the compact torus $G$ we may assume that
the induced transverse action of $G$ is isometric.
Then by \cite[Theorem 4.7]{LS18} connected components of the fixed-leaf set $X$ are $\overline{\mathcal{F}}$-saturated submanifolds of $M$ that are invariant under the action of $G$.
By \cite[Theorem 5.1]{LS18}, the kernel of the restriction homomorphism
\begin{equation} \label{restriction-homo}
i^*: H_{G}(M,\mathcal{F})\rightarrow H_G(X, \mathcal{F})
\end{equation} is a $S\mathfrak{g}^*$-torsion submodule.
However, by Theorem \ref{eq-formality} $H_G(M,\mathcal{F})$ is a free $S\mathfrak{g}^*$-module.
Therefore the map (\ref{restriction-homo}) must be injective.
\end{proof}

According to El Kacimi-Alaoui \cite[Section 3.4.7]{KA90}, any taut transversely K\"{a}hler foliation on closed manifolds must satisfy the transverse hard Lefschetz property.
As an immediate consequence of Theorem \ref{thm1} we have the following result.
\begin{cor}\label{kirwan-injectivity}
Let $\mathcal{F}$ be a taut transversely K\"{a}hler foliation on a closed manifold $M$.
Suppose that there is a Hamiltonian action of a compact torus $G$ on $(M,\mathcal{F})$ that is also holomorphic.
Then the restriction homomorphism
\[
i^*: H_G(M,\mathcal{F})\rightarrow H_G(X,\mathcal{F})
\]
is injective, where $X$ is the fixed-leaf set.
\end{cor}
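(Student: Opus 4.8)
The plan is to treat Corollary \ref{kirwan-injectivity} as a direct specialization of Theorem \ref{thm1}: the entire task reduces to verifying that a homologically orientable transversely K\"ahler foliation carrying a holomorphic Hamiltonian torus action satisfies every hypothesis of Theorem \ref{thm1}, after which the conclusion is immediate.

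First I would unpack the transverse K\"ahler structure. By Definition \ref{kahler-foliation}, such a structure consists of a transverse complex structure $\mathcal{J}$ together with a transverse Riemannian metric $g$; the metric alone shows that $\mathcal{F}$ is a Riemannian foliation, supplying the first hypothesis of Theorem \ref{thm1}. Moreover, the associated tensor $\omega(X,Y)=g(X,\mathcal{J}Y)$ is, by the very definition of a transverse K\"ahler structure, a transversely symplectic form on $M$. Thus $(M,\mathcal{F})$ is simultaneously Riemannian and transversely symplectic, with $\omega$ serving as the transversely symplectic form with respect to which the $G$-action is Hamiltonian.

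Next I would invoke the transverse hard Lefschetz property. This is the only analytically substantial ingredient, and it is handed to us by El Kacimi-Alaoui's result \cite[Section 3.4.7]{KA90}, recalled immediately above: any homologically orientable transversely K\"ahler foliation on a closed manifold satisfies the transverse hard Lefschetz property. Since $M$ is closed and $\mathcal{F}$ is assumed homologically orientable, this property holds in our setting.

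Finally, the Hamiltonian action of the compact torus $G$ is supplied directly by the hypothesis; I would note that the holomorphicity assumption of Definition \ref{def5.3} is stronger than what Theorem \ref{thm1} requires and plays no role in establishing injectivity---it is retained only because it is natural in the K\"ahler context and is needed for the applications developed later. With all hypotheses of Theorem \ref{thm1} verified, the restriction homomorphism $i^*\colon H_G(M,\mathcal{F})\rightarrow H_G(X,\mathcal{F})$ is injective. Since this is a corollary engineered to be immediate, I do not expect a genuine obstacle; the only point demanding any care is confirming that the transverse K\"ahler form genuinely qualifies as a transversely symplectic form in the required sense, namely a closed $2$-form whose kernel at each point is exactly $T_x\mathcal{F}$, and this is already built into Definition \ref{kahler-foliation}.
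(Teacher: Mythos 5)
Your proposal is correct and follows exactly the route the paper intends: the corollary is stated as an immediate consequence of Theorem \ref{thm1} once one notes (via El Kacimi-Alaoui) that a homologically orientable transversely K\"ahler foliation on a closed manifold is Riemannian, transversely symplectic, and satisfies the transverse hard Lefschetz property. Your added observation that the holomorphicity hypothesis is not actually used for injectivity is accurate and consistent with the paper.
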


%========================================================
\section{Clean Hamiltonian torus actions on transversely symplectic foliations}\label{sec4}

In this section we collect some useful facts on the image of a moment map of a clean Hamiltonian torus action, which we need later in this paper.
We recall the definition of \emph{clean} group actions on foliated manifolds.

\begin{defn}(\cite[Section 2.6]{LS17})\label{clean-action}
Consider the foliate action of a Lie group $G$ on a foliated manifold $(M, \mathcal{F})$.
We say that the $G$-action on $M$ is clean,
if there exists an immersed connected normal Lie subgroup $N$ of $G$, called the \emph{null subgroup}, such that
$$
T_{x}(N\cdot x)=T_{x}(G\cdot x)\cap T_{x}\mathcal{F},
\,\,\mathrm{for}\,\,\mathrm{all}\,\,x\in M.
$$
\end{defn}

\begin{defn}
Suppose that $\mathcal{F}$ is a transversely symplectic foliation on a manifold $M$, and that there is a clean foliate action of a compact torus $T$ with Lie algebra $\mathfrak{t}$ on $(M,\mathcal{F})$.
For all $x\in M$, set
\[
\mathfrak{t}_{\bar{x}}=\{ \zeta\in \mathfrak{t}\,\vert\,\zeta_{M}(x) \in T_x\mathcal{F}\}.
\]
We say that $\xi \in \mathfrak{t}$ is \emph{generic}, if
\[
\xi\notin \bigcup_{\mathfrak{t}_{\bar{x}}\neq \mathfrak{t}} \mathfrak{t}_{\bar{x}}.
\]
\end{defn}

\begin{rem}
Suppose that $N$ is the null subgroup of $T$, and that $\mathfrak{n}=\text{Lie}(N)$.
Since the action of $T$ is assumed to be clean, it is straightforward to check that $\mathfrak{t}_{\bar{x}}=\mathfrak{t}_x+\mathfrak{n}$. If $M$ is compact, then there are only finitely many distinct isotropy subalgebras $\mathfrak{t}_x$, and so only finitely many Lie subalgebras $\mathfrak{t}_{\bar{x}}$.
\end{rem}

Throughout the rest of this section, we assume that there is an \emph{effective} and \emph{clean} Hamiltonian action of a compact torus $T$ on a transversely symplectic foliation $\mathcal{F}$ over a compact manifold $M$, that the null group is $N$ with a Lie algebra $\mathfrak{n}$, and that
$\Phi: M\rightarrow \mathfrak{t}^*$ is a moment map, where $\mathfrak{t}=\text{Lie}(T)$.

\begin{lem}\label{generic-choice}
For any generic element $\xi \in
\mathfrak{t}^*$, the critical subset $\mathrm{Crit}\,(\Phi^{\xi})$ of $\Phi^{\xi}$ coincides with $X$, the set of fixed leaves under the action of $T$.
\end{lem}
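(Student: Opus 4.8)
The plan is to show the two inclusions $\mathrm{Crit}\,(\Phi^\xi) \subseteq X$ and $X \subseteq \mathrm{Crit}\,(\Phi^\xi)$ for a generic $\xi$. The defining relation of a Hamiltonian action, $\iota(\xi_M)\omega = d\langle \Phi, \xi\rangle = d\Phi^\xi$, is the key tool: it translates a statement about the vanishing of the differential $d\Phi^\xi$ (the critical point condition) into a statement about the fundamental vector field $\xi_M$ lying in the kernel of $\omega$, which is precisely $T\mathcal{F}$.

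Let me think about the structure more carefully. First I would recall that since $\omega$ is transversely symplectic, at each $x \in M$ we have $\ker \omega_x = T_x\mathcal{F}$. Now a point $x$ is critical for $\Phi^\xi$ exactly when $d_x\Phi^\xi = 0$, i.e. $\iota(\xi_M(x))\omega_x = 0$, which by the kernel description means $\xi_M(x) \in T_x\mathcal{F}$. By the definition of $\mathfrak{t}_{\bar x}$, this is equivalent to $\xi \in \mathfrak{t}_{\bar x}$. So I would establish the clean identity
\[
\mathrm{Crit}\,(\Phi^\xi) = \{\, x \in M \,\vert\, \xi \in \mathfrak{t}_{\bar x}\,\}.
\]
This reduces everything to understanding for which $x$ the generic $\xi$ belongs to $\mathfrak{t}_{\bar x}$. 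For the fixed-leaf set, I would note that $x \in X$ means the whole leaf through $x$ is fixed, which should force $\mathfrak{t}_{\bar x} = \mathfrak{t}$ (every $\zeta_M(x)$ is tangent to the leaf), so trivially $\xi \in \mathfrak{t} = \mathfrak{t}_{\bar x}$ and hence $X \subseteq \mathrm{Crit}\,(\Phi^\xi)$; here I would lean on the clean-action description $\mathfrak{t}_{\bar x} = \mathfrak{t}_x + \mathfrak{n}$ from the Remark to characterize $X$ as the locus where $\mathfrak{t}_{\bar x} = \mathfrak{t}$.

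For the reverse inclusion, suppose $x$ is critical but $x \notin X$, so $\mathfrak{t}_{\bar x} \neq \mathfrak{t}$. Then $\xi \in \mathfrak{t}_{\bar x} \subseteq \bigcup_{\mathfrak{t}_{\bar y} \neq \mathfrak{t}} \mathfrak{t}_{\bar y}$, contradicting genericity of $\xi$. This is exactly why the genericity hypothesis is imposed: it is the condition that excludes $\xi$ from every proper subalgebra $\mathfrak{t}_{\bar y}$, and the Remark guarantees that on a compact $M$ there are only finitely many such subalgebras, so their union is a genuinely proper (measure-zero, lower-dimensional) subset and a generic $\xi$ exists. Combining both inclusions gives $\mathrm{Crit}\,(\Phi^\xi) = X$.

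The main obstacle I anticipate is verifying the characterization $x \in X \iff \mathfrak{t}_{\bar x} = \mathfrak{t}$ rather than merely $x \in X \implies \mathfrak{t}_{\bar x} = \mathfrak{t}$, since $\mathfrak{t}_{\bar x} = \mathfrak{t}$ a priori only says every $\zeta_M(x)$ is tangent to the leaf $\mathcal{F}$ at the single point $x$, whereas membership in the fixed-\emph{leaf} set $X$ is a condition on the entire leaf (and, via $\overline{\mathcal{F}}$-saturation, its closure). I expect this gap to be bridged by the cleanness of the action together with the result cited in the proof of Theorem~\ref{thm1} that connected components of $X$ are $\overline{\mathcal{F}}$-saturated $G$-invariant submanifolds; invariance of the relevant data along leaves should promote the pointwise condition to a leafwise one. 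The remaining steps are the essentially formal manipulations with $\iota(\xi_M)\omega$ and the bookkeeping of which subalgebras $\mathfrak{t}_{\bar x}$ equal $\mathfrak{t}$.
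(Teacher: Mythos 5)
Your proposal is correct and follows essentially the same route as the paper: both use the Hamiltonian equation together with $\ker\omega_x = T_x\mathcal{F}$ to identify $\mathrm{Crit}\,(\Phi^{\xi})$ with $\{x \in M \mid \xi \in \mathfrak{t}_{\bar{x}}\}$, then invoke genericity to force $\mathfrak{t}_{\bar{x}}=\mathfrak{t}$ and conclude that the leaf through $x$ is $T$-invariant. The pointwise-to-leafwise gap you flag is real but is passed over just as quickly in the paper's own proof; it is closed by the $\mathcal{F}$-saturation of the set $M^{[\mathfrak{t}]}$ established in \cite[Proposition 3.4.4]{LS17}, exactly the kind of saturation result you anticipated using.
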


\begin{proof}
It suffices to show that
$\text{Crit}\,(\Phi^{\xi})\subset X$.
Suppose that $x \in M$ is a critical point of $\Phi^{\xi}$.
Then it follows from the Hamiltonian equation $\iota(\xi_M)\omega=d\Phi^{\xi}$ that $\xi_M$
is tangent to the leaf at $x$. This shows that $\xi_M\in \mathfrak{t}_{\overline{x}}$.
Since $\xi$ is generic, we must have that $\mathfrak{t}_{\bar{x}}=\mathfrak{t}$.
Thus for all $ \zeta\in\mathfrak{t}$, the vector field $\zeta_M$ must be tangent to the leaf at $x$.
It follows that the leaf through $x$ is invariant under the action of $T$.
This completes the proof of Lemma \ref{generic-choice}.
\end{proof}

\begin{lem} \label{faces}
The image $\Delta=\Phi(M)$ of the moment map
is a convex polytope, called the \emph{moment polytope}, contained in the annihilator of $\mathfrak{n}$ in $\mathfrak{g}^*$.
Moreover, for any face $F\subset \Delta$, the set $\Phi^{-1}(F)$ is a connected $\mathcal{F}$-saturated $T$-invariant submanifold of $M$ to which the restriction of $\mathcal{F}$ is transversely symplectic.
\end{lem}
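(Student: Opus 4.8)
The plan is to treat the convexity assertion and the statement about faces separately. The claim that $\Delta=\Phi(M)$ is a convex polytope is precisely the content of the convexity theorem for clean Hamiltonian actions established by Lin and Sjamaar \cite{LS17}, so I would simply invoke it; the substance of the lemma lies in the structure of the preimages $\Phi^{-1}(F)$.

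The $T$-invariance of $\Phi^{-1}(F)$ is immediate, since the coadjoint action of the torus $T$ on $\mathfrak{t}^*$ is trivial and hence $\Phi$ is $T$-invariant. To see that $\Phi^{-1}(F)$ is $\mathcal{F}$-saturated, I would first observe that $\Phi$ is constant along the leaves of $\mathcal{F}$: for any $X\in\mathfrak{X}(\mathcal{F})$ and any $\xi\in\mathfrak{t}$ the Hamiltonian equation gives $d\Phi^{\xi}(X)=\omega(\xi_M,X)=0$, because $X\in T\mathcal{F}=\ker\omega$. Thus $\Phi$ descends to the leaf space, and the (connected) leaf through any point of $\Phi^{-1}(F)$ is contained in $\Phi^{-1}(F)$. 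For the submanifold and connectedness assertions, I would use the elementary fact that every face $F$ of the polytope $\Delta$ is the set on which some linear functional $\langle\cdot,\xi_F\rangle$, with $\xi_F\in\mathfrak{t}$, attains its maximum over $\Delta$. Since $\Phi(M)=\Delta$, the preimage $\Phi^{-1}(F)$ is then exactly the maximum set of the function $\Phi^{\xi_F}$ on $M$. By \cite{LS17} this function is Morse--Bott, so its maximum set is a closed submanifold, and the connectedness that accompanies the convexity theorem of \cite{LS17} shows that this maximum set is connected.

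It remains to prove that the restriction of $\mathcal{F}$ to $Y:=\Phi^{-1}(F)$ is transversely symplectic, which I expect to be the main obstacle. Since $j^*\omega$ (for $j:Y\hookrightarrow M$) is automatically closed, and since $T_x\mathcal{F}\subseteq T_xY$ because $Y$ is $\mathcal{F}$-saturated, the whole issue reduces to showing that at every $x\in Y$ the kernel of $(j^*\omega)_x$ equals $T_x\mathcal{F}$; equivalently, writing $\bar\omega_x$ for the symplectic form induced by $\omega$ on the normal space $Q_x=T_xM/T_x\mathcal{F}$, that the image $V_x:=T_xY/T_x\mathcal{F}$ is a symplectic subspace of $(Q_x,\bar\omega_x)$. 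I would identify $V_x$ as the fixed subspace of the linearized transverse action of $\xi_F$ on $Q_x$: at $x$ the vector field $(\xi_F)_M$ is tangent to the leaf, since $x$ is critical for $\Phi^{\xi_F}$, so $\xi_F$ induces a linear symplectic endomorphism $\bar A_{\xi_F}\in\mathfrak{sp}(Q_x,\bar\omega_x)$, and the Morse--Bott Hessian identity $\mathrm{Hess}_x\,\Phi^{\xi_F}(u,v)=\bar\omega_x(\bar A_{\xi_F}\bar u,\bar v)$ identifies $V_x=\ker\mathrm{Hess}_x\,\Phi^{\xi_F}$ with $\ker\bar A_{\xi_F}=Q_x^{\xi_F}$. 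The fixed subspace of a linear symplectic torus action is symplectic, because $Q_x$ splits $\bar\omega$-orthogonally into its fixed part and a sum of paired nonzero weight spaces; this yields the claim.

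The care required here is the delicate part of the argument: justifying the descent to the normal space, establishing the Hessian identity in the transverse setting where $(\xi_F)_M$ need not vanish but only lie in $T_x\mathcal{F}$, and using cleanness to ensure that the null directions $\mathfrak{n}$ lie tangent to the leaves and hence do not obstruct the computation. A cleaner but equivalent route would be to pass to a local transversal, on which $\omega$ becomes a genuine symplectic form and the transverse action becomes a local Hamiltonian torus action, and then to invoke the classical fact that the preimage of a face is a symplectic submanifold; the translation back to $(M,\mathcal{F})$ then gives transverse symplecticity of $\mathcal{F}|_Y$.
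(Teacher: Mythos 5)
Your overall strategy is sound and the conclusion is reached, but you take a genuinely different route from the paper on the two substantive points. For connectedness, the paper does not argue directly with a single supporting functional; it inducts on the codimension of $F$ to reduce to the case of a facet, where $\mathfrak{h}_F/\mathfrak{n}$ is one-dimensional, and then identifies $\Phi^{-1}(F)$ with the maximum critical manifold of $\Phi^{\zeta}$ for a single $\zeta$, invoking \cite[Theorem 3.4.5]{LS17} (Morse--Bott with even indices) together with Atiyah's connectedness lemma \cite[Lemma 2.1]{At82}. Your shortcut --- choosing $\xi_F$ in the relative interior of the normal cone of $F$ so that $\Phi^{-1}(F)$ is exactly the maximum set of $\Phi^{\xi_F}$ --- works for faces of arbitrary codimension and avoids the induction, but you should replace the vague appeal to ``the connectedness that accompanies the convexity theorem'' by the same precise citation the paper uses: even indices plus Atiyah's lemma give uniqueness/connectedness of the maximum level. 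For the transverse symplecticity and the saturated-submanifold structure, the paper simply cites \cite[Proposition 3.4.4]{LS17}, which states exactly that connected components of $M^{[\mathfrak{h}_F]}=\{x\in M\mid \xi_M(x)\in T_x\mathcal{F}\ \text{for all}\ \xi\in\mathfrak{h}_F\}$ are $\mathcal{F}$-saturated submanifolds on which $\mathcal{F}$ restricts transversely symplectically; you are in effect re-proving that proposition from scratch. Your linear-algebra sketch (fixed subspace of a linear symplectic torus representation is symplectic) is the right idea, but the steps you yourself flag as delicate --- the descent of the linearized action to $Q_x=T_xM/T_x\mathcal{F}$ when $(\xi_F)_M(x)$ lies in $T_x\mathcal{F}$ without vanishing, and the Hessian identity in that setting --- are precisely where the work lives, and they are not carried out. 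The cleanest fix is the alternative you mention at the end: pass to a local transversal where $\omega$ is a genuine symplectic form, or better, just cite \cite[Proposition 3.4.4]{LS17} as the paper does. With that citation in place your proof is complete and arguably more direct than the paper's inductive argument.
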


\begin{proof}
The first assertion was shown in \cite[Theorem 3.5.1]{LS17}.
Now let
\[
\mathfrak{h}_F=
\{\xi\in \mathfrak{t}\,\vert\, \forall\, x\in F, \, \langle x, \xi\rangle=\text{max}_{y\in \Delta}\langle y, \xi\rangle ,\, \text{or}\,\langle x, \xi\rangle=\text{min}_{y\in \Delta}\langle y, \xi\rangle
\},
\] where $\langle\cdot,\cdot\rangle$ denotes the dual pairing between $\mathfrak{t}^*$ and $\mathfrak{t}$.
Then it is easy to see that $ \mathfrak{h}_F$ is a Lie subalgebra of $\text{Lie}(T)$ that contains $\mathfrak{n}$,
and that $\Phi^{-1}(F)$ is a disjoint union of connected components of
\[M^{[\mathfrak{h}_F]}=
\{x \in M\,\vert\, \xi_M(x)\in T_x\mathcal{F},\,\, \mathrm{for}\,\,\,\mathrm{all}\,\, \xi\in \mathfrak{h}_F\},\]
where $\xi_M$ denotes the fundamental vector field on $M$ generated by $\xi\in \mathfrak{h}_F$.
It follows from \cite[Proposition 3.4.4]{LS17} that each connected component of $\Phi^{-1}(F)$ must be a $\mathcal{F}$-saturated submanifold of $M$ to which the restriction of $\mathcal{F}$ is transversely symplectic.
To finish the proof of Lemma \ref{faces}, it suffices to show that $\Phi^{-1}(F)$ is connected.
By mathematical induction on the co-dimension of $F$ in $\Delta$, we may assume that $F$ is a facet of $\Delta$.
In this case,
$\mathfrak{h}_F/\mathfrak{n}$ is one dimensional. Choose a vector $\zeta\in \mathfrak{h}_F$ such that its image under the quotient map $\mathfrak{h}_F\rightarrow \mathfrak{h}_F/\mathfrak{n}$ is not zero.
Then $\Phi^{-1}(F)$ is precisely the maximum or minimal critical submanifold of $\Phi^{\zeta}$.
Note that by \cite[Theorem 3.4.5]{LS17} $\Phi^{\zeta}$ is a Morse--Bott function with even indices.
It now follows from a classical result of Atiyah \cite[Lemma 2.1]{At82} that $\Phi^{-1}(F)$ must be connected.
\end{proof}

Let $\xi\in\mathfrak{t}$ be a generic element. Then by Lemma \ref{generic-choice}, the critical subset of $\Phi^{\xi}$ coincides with the set of fixed leaves under the action of the full torus $G$.
Therefore it follows from  \cite[Theorem 3.5.1]{LS17} that there is a one-to-one correspondence between the connected components of critical submanifold of the Morse--Bott function $\Phi^{\xi}$ and the vertices of the moment polytope $\Delta=\Phi(M)$.

\begin{defn} \label{vertex-index}
Fix a generic element $\xi\in \mathfrak{t}$.
We will say that a vertex $\lambda\in \Delta$ has \emph{index} $l$ relative to the Morse--Bott function $\Phi^{\xi}$,  if the corresponding critical submanifold $\Phi^{-1}(\lambda)$ has Morse index $l$.
The index of the vertex $\lambda$ will be denoted by $\text{ind}_{\xi}(\lambda)$.
\end{defn}

\begin{lem} \label{min-vertex}
Let $\xi\in \mathfrak{t}$ be a generic element, and let $F$ be a non-empty face of the moment polytope $\Delta$.
Then there is a unique vertex of $F$, denoted by $\lambda_F$, such that
\begin{equation}\label{min} \mathrm{ind}_{\xi}(\lambda_F)<\mathrm{ind}_{\xi}(\nu),
\end{equation}
for all vertices $\nu\in F$ with $\nu\neq\lambda_F$.
The point $\lambda_F$ will be called the vertex of $F$ with the minimal index.
\end{lem}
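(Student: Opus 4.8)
The plan is to reduce the statement to a Morse-theoretic assertion on the face submanifold $\Phi^{-1}(F)$ and then invoke Atiyah's connectedness lemma. By Lemma \ref{faces}, $M_F := \Phi^{-1}(F)$ is a connected, $T$-invariant, $\mathcal{F}$-saturated submanifold to which the restriction of $\mathcal{F}$ is transversely symplectic; moreover the restricted $T$-action is again Hamiltonian with moment map $\Phi|_{M_F}$ whose image is exactly $F$, and the fixed leaves contained in $M_F$ are precisely the critical submanifolds $\Phi^{-1}(\nu)$ indexed by the vertices $\nu$ of $F$. Thus it suffices to analyze the Morse-Bott function $\Phi^\xi|_{M_F}$ on the compact manifold $M_F$.

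First I would record that $\Phi^\xi|_{M_F}$ is Morse-Bott with critical submanifolds $\Phi^{-1}(\nu)$, $\nu$ a vertex of $F$, and with only even indices; this follows by applying \cite[Theorem 3.4.5]{LS17} to the transversely symplectic Hamiltonian $T$-manifold $M_F$. Since $\xi$ is generic, the values $\langle \nu, \xi\rangle$ are pairwise distinct among the finitely many vertices of $F$, so there is a unique vertex $\lambda_F$ minimizing $\Phi^\xi$ over $F$. Applying Atiyah's lemma \cite[Lemma 2.1]{At82} to $\Phi^\xi|_{M_F}$, the set on which this function attains its minimum is connected; hence it is a single critical submanifold, necessarily $\Phi^{-1}(\lambda_F)$, and it is the unique critical submanifold of index $0$ relative to $\Phi^\xi|_{M_F}$. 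Consequently every vertex $\nu\neq\lambda_F$ of $F$ has strictly positive index relative to $\Phi^\xi|_{M_F}$.

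It remains to pass from the index relative to $M_F$ to the ambient index of Definition \ref{vertex-index}, and this is where the main difficulty lies. At each vertex $\nu$ of $F$ the ambient index splits as the index of $\Phi^\xi|_{M_F}$ at $\nu$ plus the rank of the subbundle of the normal bundle $N_F:=TM|_{\Phi^{-1}(\nu)}/TM_F$ on which the transverse Hessian of $\Phi^\xi$ is negative. To control this normal contribution I would choose $\zeta$ in the relative interior of $\mathfrak{h}_F$, so that $F=\mathrm{argmax}_\Delta\,\Phi^\zeta$ and $M_F$ is precisely the maximal critical submanifold of $\Phi^\zeta$; then $N_F$ is $T$-equivariantly the descending bundle of $\Phi^\zeta$, on which every transverse weight pairs negatively with $\zeta$, whereas the transverse weights tangent to $M_F$ pair trivially with $\zeta$ because $\Phi^\zeta$ is constant along $F$. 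The natural tool is then to replace $\xi$ by $\xi+s\zeta$: this leaves the ordering of $\Phi^\xi$ on the vertices of $F$ unchanged, so that $\lambda_F$ is unaffected, while for $s$ large it makes every normal weight $\xi$-negative and hence equalizes the normal contribution to the constant transverse codimension of $M_F$, yielding $\mathrm{ind}(\lambda_F)<\mathrm{ind}(\nu)$ in that regime. The technical heart of the argument, which I expect to be the main obstacle, is precisely the reconciliation of the ambient Morse index with the index computed on $M_F$ for the original $\xi$: one must verify that $N_F$ is indeed the descending bundle of $\Phi^\zeta$ and that its $\xi$-negative rank can be compared uniformly across the vertices of $F$, after which the strict positivity of the relative index at every $\nu\neq\lambda_F$ forces the desired strict inequality and, in particular, the uniqueness of $\lambda_F$.
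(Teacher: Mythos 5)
Your first two paragraphs reproduce the paper's proof exactly: the paper also restricts to $Y=\Phi^{-1}(F)$, uses Lemma \ref{faces} for connectedness and invariance, cites \cite[Theorem 3.4.5]{LS17} to see that $\Phi^{\xi}\vert_Y$ is Morse--Bott with even indices, and applies \cite[Lemma 2.1]{At82} together with Lemma \ref{generic-choice} to conclude that $\Phi^{-1}(\lambda_F)$ is the unique local minimum. The difficulty you isolate in your third paragraph --- passing from the index relative to $\Phi^{-1}(F)$ to the ambient index of Definition \ref{vertex-index} --- is genuine, but your proposed resolution does not close it. Replacing $\xi$ by $\xi+s\zeta$ changes the Morse--Bott function whose indices are being computed: for $s$ large you would obtain $\mathrm{ind}_{\xi+s\zeta}(\lambda_F)<\mathrm{ind}_{\xi+s\zeta}(\nu)$, and there is no mechanism for transporting this back to $\xi$, because the $\xi$-negative rank of the normal bundle genuinely varies from vertex to vertex of $F$ and is not controlled by adding multiples of $\zeta$.

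In fact the step you defer cannot be carried out, because with the ambient index of Definition \ref{vertex-index} the strict inequality (\ref{min}) is false. Take $\Delta$ to be a simple convex pentagon and run the construction of Section \ref{quasifolds}; all hypotheses of Section 4 hold, and for a generic $\xi$ the five vertices have ambient indices $0,2,2,2,4$ (twice the number of descending edges at each vertex). The minimum and maximum split the boundary $5$-cycle into two arcs, one of which contains two adjacent index-$2$ vertices; for the edge $F$ joining them both endpoints have $\mathrm{ind}_{\xi}=2$, so no vertex of $F$ satisfies (\ref{min}). What is true --- and what the paper's argument actually establishes, and the only property used downstream (e.g.\ in Steps 1 and 2 of the proof of Theorem \ref{Betti-numbers}) --- is the statement with the index of $\Phi^{-1}(\nu)$ computed for the restriction $\Phi^{\xi}\vert_{\Phi^{-1}(F)}$: there $\lambda_F$ has index $0$ and every other vertex has index at least $2$, by uniqueness of the local minimum; equivalently, $\lambda_F$ is the unique vertex of $F$ at which all edges of $F$ are ascending for $\Phi^{\xi}$. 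The paper's closing sentence, asserting that (\ref{min}) ``follows from the uniqueness of the local minimum,'' silently makes the same relative-to-ambient identification you flagged. Your instinct that this is the crux was correct, but the right conclusion is that the lemma must be read with the relative index (or restated as the local-minimum characterization), not that the ambient version can be rescued by deforming $\xi$.
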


\begin{proof}
Note that by Lemma \ref{faces} the set $Y=\Phi^{-1}(F)$ is a connected $\mathcal{F}$-saturated $T$-invariant submanifold of $M$ to which the restriction of $\mathcal{F}$ is transversely symplectic.
Clearly, the action of $T$ on $(Y,
\mathcal{F}\vert_{Y})$ is also clean and Hamiltonian. Thus by \cite[Theorem 3.4.5]{LS17}, the function $\Phi^{\xi}\vert_{Y}$ is also a Morse--Bott function with even indexes.
As a result, the restriction of $\Phi^{\xi}$ to $Y$ must have a unique local minimum on $Y$, see \cite[Lemma 2.1]{At82}.
It follows from Lemma \ref{generic-choice} and \cite[Theorem 3.5.1]{LS17} that this local minimum coincides with
$\Phi^{-1}(\lambda_F)$ for some vertex $\lambda_F \in F$. Finally, (\ref{min}) follows from the uniqueness of the local minimum.
\end{proof}

%=============================================================
\section{Basic Hodge numbers and basic Betti numbers}\label{general-theory}

\subsection{A foliated Carrell--Lieberman-type theorem}

Let $\mathcal{J}$ be a transversely holomorphic structure on a foliation $(M,\mathcal{F})$.
Then $\mathcal{J}$ induces a direct sum decomposition of the complex of basic forms
$$
\Omega(M,\mathcal{F}) =\bigoplus_{p,q\geq0} \Omega^{p,q}(M,\mathcal{F}),
$$
where by definition a basic form $\alpha \in \Omega^{p,q}(M,\mathcal{F})$ if and only if $\mathcal{J}\alpha=(\sqrt{-1})^{p-q}\alpha$.
Let $\alpha$ be a basic form of $(p,q)$-type.
Define $\bar{\partial}\alpha$ to be the component of $d\alpha$ that lies in $\Omega^{p,q+1}(M,\mathcal{F})$, and $\partial\alpha$ the component of $d\alpha$ that lies in $\Omega^{p+1,q}(M,\mathcal{F})$.
Then the exterior differential $d$ naturally splits as $d=\bar{\partial}+\partial$.

Now suppose that there is a foliate action of a compact Lie group $G$ with Lie algebra $\mathfrak{g}$ on $(M,\mathcal{F})$ which preserves the transversely holomorphic structure $\mathcal{J}$.
Since the action of $G$ is holomorphic,  $\Omega^{p,q}(M,\mathcal{F})$ is a $G$-module for all $(p, q)$.
Thus the space
\begin{equation}\label{dolb-bigrading} \Omega_{\mathfrak{g}}^{p,q}(M):=
\bigoplus_{p'+i=p\atop{q'+i=q}}\bigl(S^i(\mathfrak{g}^*)\otimes \Omega^{p'q'}(M,\mathcal{F})\bigr)^{\mathfrak{g}}
\end{equation}
is well defined for all $(p, q)$.

For any $\xi\in\mathfrak{g}$, denote by $\overline{\xi}^{1,0}_M$ and $\overline{\xi}^{0,1}_M$ respectively the $(1,0)$ and $(0,1)$ components of the transverse vector field on $M$ induced by $\xi$.
Then on the basic Cartan complex $\Omega_{\mathfrak{g}}(M,\mathcal{F})$ the operator $d'$ splits as $d'=d^{'1,0}+d^{'0,1}$, where $$(d^{'1,0}\alpha)(\xi)= \iota(\overline{\xi}^{1,0}_M)(\alpha(\xi))
\,\,\,\mathrm{and}\,\,\,
(d^{'0,1}\alpha)(\xi)=
\iota(\overline{\xi}^{0,1}_M)(\alpha(\xi)),
$$
for all $ \alpha\in \Omega_{\mathfrak{g}}(M,\mathcal{F})$ and for all $\xi\in \mathfrak{g}$.
Also note that on the space of equivariant basic forms $\Omega_{\mathfrak{g}}(M,\mathcal{F})$ the operator
$1\otimes d$ splits as
$1\otimes d=1\otimes \bar{\partial}+1\otimes \partial$.
For brevity, we will also abbreviate
$1\otimes \bar{\partial}$ to $\bar{\partial}$ and $1\otimes \partial$ to $\partial$.
Thus the equivariant exterior differential $d_{\mathfrak{g}}$ splits as $d_{\mathfrak{g}}=
\bar{\partial}_{\mathfrak{g}}+\partial_{\mathfrak{g}}$, where
\[
\begin{split} &
\partial_{\mathfrak{g}}=\partial+d^{'1,0},\,\,\,
\bar{\partial}_{\mathfrak{g}}=\bar{\partial}+d^{'0,1}.
\end{split}
\]
It is straightforward to check that
\[
\bar{\partial}_{\mathfrak{g}}^2=0,\,\,\,
\partial_{\mathfrak{g}}^2=0,\,\,\,
\mathrm{and}\,\,\,
\partial_{\mathfrak{g}}\bar{\partial}_{\mathfrak{g}}+
\bar{\partial}_{\mathfrak{g}}\partial_{\mathfrak{g}}=0.
\]

\begin{defn}
The \emph{equivariant basic Dolbeault cohomology} of $M$, denoted by
$H_{\mathfrak{g}}^{p,*}(M, \mathcal{F})$,
is defined to be the cohomology of the differential complex $\{ \Omega_{\mathfrak{g}}^{p,*}(M,\mathcal{F}), \bar{\partial}_{\mathfrak{g}}\}$.
\end{defn}

A foliation analogue of Carrell--Lieberman type theorem was first established for structural Lie algebra actions on transversely K\"ahler Killing foliations in \cite[Theorem 7.5]{GNT16}. The following result is partly motivated by extending \cite[Theorem 7.5]{GNT16} to transversely K\"ahler foliations which may not be Killing, and to symmetries which may not come from the structural algebra.

\begin{thm}\label{thm2}
Let $G$ be a compact torus, and let $\mathcal{F}$ be a taut transversely K\"{a}hler foliation on a closed manifold $M$.
Suppose that there is a Hamiltonian action of $G$ on $(M,\mathcal{F})$ which is also holomorphic.
Then each connected component of the fixed-leaf set $X$ inherits a taut transversely K\"ahler foliation from that of $M$. Furthermore, $
H^{p,q}(M,\mathcal{F})=0$
for
$|p-q|>\, \mathrm{codim}_{\mathbb{C}}\,(\mathcal{F}\vert_{X})$,
where $\mathrm{codim}_{\mathbb{C}}\,(\mathcal{F}\vert_{X})$ is the maximum of the finite set \[\{\mathrm{codim}_{\mathbb{C}}\,(\mathcal{F}\vert_{X_i})\,\vert\, X_i \,\text{is a connected component of }\, X\}.\]\end{thm}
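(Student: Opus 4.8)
The plan is to reduce the theorem to a dimension count on the fixed-leaf set, carried out inside an equivariant refinement of basic Dolbeault cohomology that keeps track of the Hodge level $\ell = p-q$. For the first assertion I would begin by averaging the transverse metric over $G$, so that (as in the proof of Theorem \ref{thm1}) the induced transverse action becomes isometric while remaining holomorphic and Hamiltonian. By \cite[Proposition 5.2.3]{LS18} each connected component $X_i$ of the fixed-leaf set is then an $\overline{\mathcal{F}}$-saturated $G$-invariant submanifold. To see that $\mathcal{F}|_{X_i}$ is transversely K\"ahler, I would argue pointwise on the normal bundle $Q=TM/T\mathcal{F}$: since the action is holomorphic, the isotropy representation on $Q$ commutes with $\mathcal{J}$, so the subbundle of $Q|_{X_i}$ tangent to $X_i$ (the fixed subspace of the isotropy representation) is $\mathcal{J}$-invariant. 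Hence $X_i$ is a transversely holomorphic submanifold, and restricting $\mathcal{J}$ together with the averaged transverse metric $g$ produces a transverse K\"ahler structure whose K\"ahler form is the restriction of $\omega(\cdot,\cdot)=g(\cdot,\mathcal{J}\cdot)$. Writing $c_i=\mathrm{codim}_{\mathbb{C}}(\mathcal{F}|_{X_i})$, the basic Dolbeault groups then satisfy $H^{p,q}(X_i,\mathcal{F})=0$ unless $0\le p\le c_i$ and $0\le q\le c_i$, so every nonzero Hodge level on $X$ obeys $|\ell|\le c:=\max_i c_i$.

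The heart of the proof is to promote the results of Section 3 to statements that respect the Hodge level. On the equivariant basic Cartan complex the symmetric algebra $S\mathfrak{g}^*$ contributes equally to $p$ and $q$, so the splitting $d_{\mathfrak{g}}=\partial_{\mathfrak{g}}+\bar{\partial}_{\mathfrak{g}}$ shifts $\ell$ by $\mp 1$, and $\ell$ descends to a grading on cohomology once a $\partial\bar{\partial}$-type lemma is available. I would therefore first upgrade the basic $\bar{\partial}\partial$-lemma (Theorem \ref{basic-ddbar-lemma}) to an equivariant $\bar{\partial}\partial$-lemma on $\Omega_{\mathfrak{g}}(M,\mathcal{F})$ for the operators $\partial_{\mathfrak{g}},\bar{\partial}_{\mathfrak{g}}$. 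This yields a Hodge decomposition of the equivariant basic cohomology into equivariant basic Dolbeault pieces $H^{p,*}_{\mathfrak{g}}(M,\mathcal{F})$, compatibly with the $(S\mathfrak{g}^*)^G$-module isomorphism of Theorem \ref{eq-formality}; in particular each Hodge-level summand of $H_G(M,\mathcal{F})$ is a free $S\mathfrak{g}^*$-module whose reduction is $\bigoplus_{p-q=\ell}H^{p,q}(M,\mathcal{F})$. Because the inclusion $i:X\hookrightarrow M$ is transversely holomorphic, $i^*$ preserves bidegree; combined with the Hodge decomposition of both sides, the Kirwan injectivity of Corollary \ref{kirwan-injectivity} then refines to an injection on each Hodge level.

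With these refinements in place, the conclusion is a short dimension count. Suppose $H^{p,q}(M,\mathcal{F})\neq 0$ for some $\ell=p-q$ with $|\ell|>c$. By the refined formality the Hodge-level-$\ell$ summand of $H_G(M,\mathcal{F})$ is a nonzero free $S\mathfrak{g}^*$-module, and by the refined injectivity it embeds into the Hodge-level-$\ell$ part of $H_G(X,\mathcal{F})=S\mathfrak{g}^*\otimes H(X,\mathcal{F})$, where the transverse action is trivial. But the first part shows that $H(X,\mathcal{F})$ carries no classes of Hodge level $\ell$ once $|\ell|>c$, so this target vanishes, a contradiction. Hence $H^{p,q}(M,\mathcal{F})=0$ whenever $|p-q|>c$, as claimed.

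I expect the main obstacle to be establishing the equivariant $\bar{\partial}\partial$-lemma and the resulting Hodge-level-compatible refinements of Theorem \ref{eq-formality} and Corollary \ref{kirwan-injectivity}: the operators $\partial_{\mathfrak{g}}$ and $\bar{\partial}_{\mathfrak{g}}$ incorporate the contraction terms $d^{'1,0}, d^{'0,1}$, so the passage from the basic $\bar{\partial}\partial$-lemma to its equivariant counterpart is not purely formal and must exploit the transverse K\"ahler identities together with the freeness supplied by Theorem \ref{eq-formality}. By comparison, the geometric input of the first part -- verifying that each fixed-leaf component is genuinely transversely K\"ahler -- is routine, but it deserves care since it is precisely what pins down the bound $c$.
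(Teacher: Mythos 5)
Your overall architecture --- restrict to the fixed-leaf set, which you show is transversely K\"ahler of complex codimension at most $c$, then push the vanishing back to $M$ via Kirwan injectivity and a bidegree-compatible refinement of equivariant basic cohomology --- is the same as the paper's, and your treatment of the first assertion (averaging the metric, invoking \cite[Proposition 5.2.3]{LS18}, and checking $\mathcal{J}$-invariance of $TX_i/T(\mathcal{F}\vert_{X_i})$ inside $Q\vert_{X_i}$) matches the paper. The problem is that the load-bearing step of your plan, the equivariant $\bar{\partial}\partial$-lemma for $\partial_{\mathfrak{g}},\bar{\partial}_{\mathfrak{g}}$ on $\Omega_{\mathfrak{g}}(M,\mathcal{F})$ and the resulting Hodge-level decomposition of $H_G(M,\mathcal{F})$ as a free $(S\mathfrak{g}^*)^G$-module, is asserted rather than proved; you yourself flag it as the main obstacle and as ``not purely formal.'' Without it there is no well-defined Hodge level on $H_G(M,\mathcal{F})$, no refined injectivity, and the final dimension count cannot be run. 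As written, the proof is therefore incomplete at exactly the point where all the work lies.

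The paper avoids this difficulty by asking for much less: it only needs that the spectral sequence of the double complex $\{\Omega^{\bullet,\bullet}_{\mathfrak{g}}(M,\mathcal{F}),\bar{\partial}_{\mathfrak{g}},\partial_{\mathfrak{g}}\}$ relative to the horizontal filtration degenerates at $E_1$, which is exactly \cite[Theorem 7.1]{GNT16}, so that $E_1^{p,q}=H^{p,q}_{\mathfrak{g}}(M,\mathcal{F})$ computes the associated graded of $H_G(M,\mathcal{F})$. Injectivity of $i^*$ on total equivariant cohomology (Theorem \ref{thm1}) then passes to the $E_1$ pages, and the vanishing of $H^{p,q}_{\mathfrak{g}}(X,\mathcal{F}\vert_X)=\bigoplus_j S^j(\mathfrak{g}^*)\otimes H^{p-j,q-j}(X,\mathcal{F}\vert_X)$ for $|p-q|>c$ forces $H^{p,q}_{\mathfrak{g}}(M,\mathcal{F})=0$, hence $H^{p,q}(M,\mathcal{F})=0$ by degeneration again. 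If you want to salvage your write-up, replace the unproved equivariant $\bar{\partial}\partial$-lemma with this citation. Two further points you should not skip: (i) the degeneration on the $X$-side requires the ordinary basic $\bar{\partial}\partial$-lemma (Theorem \ref{basic-ddbar-lemma}) on each $X_i$, and hence the homological orientability of $\mathcal{F}\vert_{X_i}$, which the paper establishes by pulling back the Molino sheaf along $X_i\hookrightarrow M$ and applying Lemma \ref{molino-sheaf}; and (ii) passing from injectivity on total cohomology to injectivity on the graded pieces needs a (short but nontrivial) homological-algebra argument about maps of filtered modules with degenerate spectral sequences, which should be made explicit rather than absorbed into the phrase ``refines to an injection on each Hodge level.''
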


\begin{proof}
Equip the equivariant basic Cartan complex $\{\Omega_{\mathfrak{g}}(M,\mathcal{F}), d_{\mathfrak{g}}\}$
with the bi-grading given by
$$
\Omega^k_{\mathfrak{g}}(M,\mathcal{F})
=\bigoplus_{p+q=k}
\Omega^{p,q}_{\mathfrak{g}}(M,\mathcal{F}),
$$
together with the vertical differential
$\bar{\partial}_{\mathfrak{g}}$ and the horizontal differential $\partial_{\mathfrak{g}}$.
It was shown in \cite[Theorem 7.1]{GNT16} that the spectral sequence of the double complex
$\{\Omega^{\bullet,\bullet}_{\mathfrak{g}}(M,\mathcal{F}), \bar{\partial}_{\mathfrak{g}},\partial_{\mathfrak{g}}\}$ relative to the horizontal filtration degenerates at the $E_1$ term.

Since the connected components of fixed-leaf set $X$ is a $\mathcal{F}$-saturated closed submanifold of $M$, the normal bundle $TX/T\mathcal{F}\vert_X$ of the restricted foliation $\mathcal{F}\vert_X$ is a subbundle of the pullback of $TM/T\mathcal{F}$ to $X$. Let $\mathcal{J}:Q\rightarrow Q$ be the transversely holomorphic structure on $(M,\mathcal{F})$.
Applying \cite[Proposition 4.2]{LS18}, it is straightforward to check that $TX/T(\mathcal{F}\vert_X)$ is an invariant subbundle of $\mathcal{J}:Q\vert_X\rightarrow Q\vert_X$;
moreover, a routine check of definition shows that $\mathcal{J}:TX/T(\mathcal{F}\vert_X)\rightarrow TX/T(\mathcal{F}\vert_X)$ is also integrable. Hence $X$ inherits the structure of a transversely K\"ahler foliation from that of $M$.
 Moreover, by Proposition \ref{submfld} in the appendix the inherited transversely K\"ahler foliation on each connected component of $X$ is taut.

Observe that the induced transverse $G$-action on each $X_i$ is \emph{trivial}, which implies that $\Omega(X_i,\mathcal{F}|_{X_{i}})$ is a trivial $\mathfrak{g}^{\star}$-module.
Applying Theorem \ref{basic-ddbar-lemma} to the taut transversely K\"ahler foliation $(X_i, \mathcal{F}\vert_{X_i})$, an easy calculation shows that the spectral sequence associated to the double complex $\{\Omega^{*,*}_{\mathfrak{g}}(X_i, \mathcal{F}\vert_{X_i}), \bar{\partial}_{\mathfrak{g}}, \partial_{\mathfrak{g}}\}$
relative to the horizontal filtration also degenerates at the $E_1$ term.

By Theorem \ref{thm1}, the restriction homomorphism
$$
i^{*}: H_G(M,\mathcal{F})\longrightarrow H_G(X,\mathcal{F}\vert_X)
$$
is injective, so is the homomorphism
$$
i^*: E_{\infty}(\Omega_{\mathfrak{g}}(M,\mathcal{F}))
\longrightarrow E_{\infty}(\Omega_{\mathfrak{g}}(X,\mathcal{F}\vert_X)).
$$
It then follows from the degeneracies of the spectral sequences at the $E_{1}$ terms that the homomorphism
$$
i^*: E_{1}^{p,q}(\Omega_{\mathfrak{g}}(M,\mathcal{F}))
\longrightarrow E_{1}^{p,q}(\Omega_{\mathfrak{g}}(X,\mathcal{F}\vert_X))
$$ is also injective.
Since
$E_1^{p,q}(\Omega_{\mathfrak{g}}(M,\mathcal{F}))
=H^{p,q}_{\mathfrak{g}}(M,\mathcal{F})$
and
$
E_1^{p,q}(\Omega_{\mathfrak{g}}(X,\mathcal{F}\vert_X))
=H^{p,q}_{\mathfrak{g}}(X,\mathcal{F}\vert_X)
$,
we conclude that the restriction homomorphism
\begin{equation}\label{m-x-inj}
i^*:H^{p,q}_{\mathfrak{g}}(M,\mathcal{F})
\longrightarrow H^{p,q}_{\mathfrak{g}}(X,\mathcal{F}\vert_X)= \displaystyle \bigoplus_{0\leq j\leq \text{min}\{p,q\}} S^{j}(\mathfrak{g}^*)\otimes H^{p-j,q-j}(X,\mathcal{F}\vert_X)
\end{equation}
is injective.

By our assumption, for all $\vert p-q\vert >\mathrm{codim}_{\mathbb{C}}\,(\mathcal{F}\vert_X)$, we have that $\Omega^{p-j,q-j}(X,\mathcal{F}\vert_X)=0$, which implies that $H^{p-j,q-j}(X,\mathcal{F})=0$.
Thus by the injectivity of \eqref{m-x-inj} we must have that $H^{p,q}_{\mathfrak{g}}(M,\mathcal{F})=0$.
It follows from \cite[Theorem 7.1]{GNT16} again that $H^{p,q}(M,\mathcal{F})=0$.
This completes the proof.
\end{proof}

The following result is a direct consequence of Theorem \ref{thm2}.
\begin{cor}\label{isolated-fixed-leave}
Under the same assumptions as stated in Theorem \ref{thm2}, if $\mathcal{F}$ has only finitely many leaves that are invariant under the action of $G$, then
$
H^{p,q}(M,\mathcal{F})=0
$
for any $p\neq q$.
\end{cor}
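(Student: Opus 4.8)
The plan is to reduce the statement directly to Theorem \ref{thm2} by showing that the hypothesis of finitely many invariant leaves forces $\mathrm{codim}_{\mathbb{C}}(\mathcal{F}\vert_X)=0$. First I would recall from Theorem \ref{thm2} that each connected component $X_i$ of the fixed-leaf set $X$ is a $G$-invariant $\mathcal{F}$-saturated transversely K\"ahler submanifold of $M$. In particular $X_i$ is a connected manifold carrying the restricted foliation $\mathcal{F}\vert_{X_i}$, whose leaves are precisely the $\mathcal{F}$-leaves of $M$ contained in $X_i$, and whose complex transverse codimension $\mathrm{codim}_{\mathbb{C}}(\mathcal{F}\vert_{X_i})$ is the quantity entering the bound of Theorem \ref{thm2}.

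The key step is to translate the finiteness hypothesis into the vanishing of this codimension. Since $X$ is the union of the $G$-invariant leaves and there are only finitely many of them, each component $X_i$ contains only finitely many leaves. I would then argue that a connected $\mathcal{F}$-saturated submanifold carrying only finitely many leaves must have transverse codimension zero: in any foliated chart the plaques are parameterized by a transversal of dimension equal to the codimension, so a positive codimension would produce a continuum of pairwise distinct leaves, contradicting finiteness. Hence each $X_i$ is a single closed leaf, the normal bundle $TX_i/T(\mathcal{F}\vert_{X_i})$ vanishes, and $\mathrm{codim}_{\mathbb{C}}(\mathcal{F}\vert_{X_i})=0$. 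Taking the maximum over the finitely many components gives $\mathrm{codim}_{\mathbb{C}}(\mathcal{F}\vert_X)=0$.

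Finally I would invoke Theorem \ref{thm2} with this value: it yields $H^{p,q}(M,\mathcal{F})=0$ whenever $|p-q|>0$, which is exactly the claim that $H^{p,q}(M,\mathcal{F})=0$ for all $p\neq q$. The only point that requires genuine care is the middle step, namely passing from ``finitely many invariant leaves'' to the vanishing of $\mathrm{codim}_{\mathbb{C}}(\mathcal{F}\vert_X)$; everything else is a formal specialization of Theorem \ref{thm2}. I therefore expect this local-structure argument to be the main (and essentially the only) obstacle, and it is dispatched by the standard product description of a foliation in a foliated chart.
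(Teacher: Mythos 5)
Your argument is correct and is precisely the intended one: the paper states this corollary without proof as an ``easy consequence'' of Theorem \ref{thm2}, the content being exactly your middle step that finitely many invariant leaves forces each connected component of $X$ to be a single leaf, so $\mathrm{codim}_{\mathbb{C}}(\mathcal{F}\vert_X)=0$. The only cosmetic refinement worth making is in the chart argument: distinct points of a local transversal can lie on the same global leaf, so the clean way to phrase it is that each leaf meets a transversal in an at most countable set, whence finitely many leaves cannot cover a positive-dimensional transversal.
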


%===========================================================
\subsection{Basic Betti numbers}\label{basic-Betti}

In this subsection we apply Theorem \ref{thm1} to study the basic Betti numbers of a Hamiltonian transversely symplectic foliation.
Throughout this section we will make the following assumptions.

\begin{flushleft}
(\textbf{A1}) $\mathcal{F}$ is a transversely symplectic foliation of co-dimension $q$  on a compact manifold $M$ which satisfies the transverse hard Lefshetz property.
\end{flushleft}

\begin{flushleft}
(\textbf{A2}) There is a clean Hamiltonian action of a compact torus $T$ on $(M, \mathcal{F})$ with the moment map $\Phi:M\rightarrow\mathfrak{t}^{*}$,
which has only finitely many leaves invariant under the action of $T$.
\end{flushleft}

\begin{flushleft}
(\textbf{A3}) $\mathcal{F}$ is a taut Riemannian foliation on $M$.
\end{flushleft}

We will also need the following Morse inequality for basic cohomologies established in  \cite[Theorem A]{Al93}.

\begin{thm}\label{Morse-inequality}
Suppose that $f:M\rightarrow\mathbb{R}$ is a basic Morse--Bott function on a Riemannian foliation $(M, \mathcal{F})$ of co-dimension $q$, that the critical submanifolds of $f$ are isolated closed leaves, and that $M$ is compact.
Let $b_j(M,\mathcal{F})=
\mathrm{dim}_{\mathbb{R}}H^{j}(M,\mathcal{F})$, and let $v_j(f)$ be the number of critical submanifolds of $f$ which has Morse index $j$.
Then we have the inequalities
\[ b_{0}(M,\mathcal{F})\leq\nu_{0}(f),\]
\[b_{1}(M,\mathcal{F})-b_{0}(M,\mathcal{F})\leq\nu_{1}(f)-\nu_{0}(f),\]
\[b_{2}(M,\mathcal{F})-b_{1}(M,\mathcal{F})+b_{0}(M,\mathcal{F})\leq\nu_{2}(f)-\nu_{1}(f)+\nu_{0}(f),\]
etc. Moreover, we also have the following equality
\[\sum^{q}_{j=0}(-1)^{j}b_{j}(M,\mathcal{F})=\sum^{q}_{j=0}(-1)^{j}\nu_{j}(f).\]

\end{thm}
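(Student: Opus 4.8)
The plan is to adapt the classical de Rham proof of the Morse inequalities to the basic complex, replacing ordinary forms by basic forms throughout. The key structural fact is that, since $f$ is basic, it is constant along the leaves, so every sublevel set $M_c=f^{-1}((-\infty,c])$ is $\mathcal{F}$-saturated. Hence $\mathcal{F}$ restricts to a Riemannian foliation on each $M_c$, the inclusions $M_{c'}\hookrightarrow M_c$ for $c'<c$ are morphisms of foliated manifolds, and truncating the basic de Rham complex produces a relative basic cohomology $H^*(M_c,M_{c'};\mathcal{F})$ fitting into a long exact sequence with $H^*(M_c;\mathcal{F})$ and $H^*(M_{c'};\mathcal{F})$. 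I would also record at the outset that for a Riemannian foliation on a compact manifold the basic cohomology is finite-dimensional, so that all the Betti numbers $b_j(M,\mathcal{F})$ appearing in the statement are finite.

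First I would order the critical values $c_1<\cdots<c_r$ and interleave regular values $a_0<c_1<a_1<\cdots<c_r<a_r$ with $a_0<\min f$, obtaining a filtration $\emptyset=M_{a_0}\subset M_{a_1}\subset\cdots\subset M_{a_r}=M$ in which each slab between $M_{a_{k-1}}$ and $M_{a_k}$ contains exactly one critical value. The heart of the argument is then to compute the relative groups $H^*(M_{a_k},M_{a_{k-1}};\mathcal{F})$. Because the critical submanifolds are isolated closed leaves, $f$ is in effect a transverse Morse function: near a critical leaf $L_k$ of transverse index $\lambda_k$, a foliated Morse normal form writes $f=f(L_k)-|u|^2+|v|^2$ in transverse coordinates, with $u$ ranging over the $\lambda_k$-dimensional descending directions. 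Since the leaf space is locally modeled on a single point near $L_k$, I expect $H^*(M_{a_k},M_{a_{k-1}};\mathcal{F})$ to be one-dimensional and concentrated in degree $\lambda_k$, the basic analogue of attaching a single $\lambda_k$-cell. I would prove this through a basic Thom isomorphism for the descending directions, building a leafwise-invariant (hence basic) Thom class supported near $L_k$ via the transverse exponential map; the transverse orientability available in the Riemannian setting is what produces this class with real coefficients. Summing over the critical leaves, the total dimension of $\bigoplus_k H^j(M_{a_k},M_{a_{k-1}};\mathcal{F})$ equals $\nu_j(f)$.

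With these two ingredients in hand, the inequalities and the Euler-characteristic equality follow by pure homological algebra. Applying the subadditivity of cohomology dimensions along the long exact sequences of the filtration, that is, the standard algebraic lemma behind the Morse inequalities, one obtains
\[
\sum_j \nu_j(f)\,t^j-\sum_j b_j(M,\mathcal{F})\,t^j=(1+t)\,Q(t)
\]
for a polynomial $Q$ with non-negative coefficients. Extracting the coefficient of $t^k$ yields the strong Morse inequalities as displayed, and evaluating at $t=-1$ gives the equality $\sum_j(-1)^j b_j(M,\mathcal{F})=\sum_j(-1)^j\nu_j(f)$.

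The one genuinely foliated step, and the one I expect to be the main obstacle, is the local computation of $H^*(M_{a_k},M_{a_{k-1}};\mathcal{F})$. In the classical case this is packaged as the homotopy statement that passing a critical point amounts to attaching a cell, but basic cohomology is not the cohomology of any honest topological space, so there is no homotopy-theoretic shortcut and everything must be carried out at the level of the basic de Rham complex. The delicate point is to perform the excision and deformation-retraction arguments using basic forms that are simultaneously invariant along the leaves and adapted to the transverse quadratic normal form, which forces systematic use of the holonomy-invariant transverse metric supplied by the Riemannian structure. Once this local model and the attendant basic Thom isomorphism are secured, the remainder of the argument is formally identical to the classical de Rham proof.
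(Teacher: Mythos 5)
A preliminary remark on the comparison you asked for: the paper contains no proof of this statement at all --- it is imported verbatim from \'Alvarez L\'opez \cite[Theorem A]{Al93} --- so your proposal is a from-scratch reconstruction rather than something that can be matched line by line against the text. The architecture you propose (saturated sublevel sets, a relative basic cohomology with long exact sequences, subadditivity along the filtration, and the polynomial identity evaluated at $t=-1$) is the natural one, and the supporting facts you invoke (finite-dimensionality of $H^*(M,\mathcal{F})$ for a Riemannian foliation on a compact manifold, surjectivity of restriction of basic forms needed for the long exact sequence) are available from standard references.

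The genuine gap is in the step you yourself identify as the crux, and it is not merely technical. Near an isolated closed leaf $L_k$ the transverse structure is not modeled on a single point: the local model is $\widetilde{L}_k\times_{H_k}D^{q}$, where the holonomy group $H_k$ acts on the transverse slice through a subgroup of $O(q)$ (linear holonomy, using the transverse metric) and preserves the transverse Hessian of $f$, hence the splitting $D^{q}=D^{\lambda_k}\times D^{q-\lambda_k}$. The relative basic cohomology of the slab is then the $H_k$-invariant part of $H^{*}(D^{\lambda_k},\partial D^{\lambda_k})\otimes H^{*}(D^{q-\lambda_k})$, which is $\mathbb{R}$ in degree $\lambda_k$ precisely when $H_k$ preserves the orientation of the descending subspace, and is $0$ otherwise. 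Your appeal to ``transverse orientability available in the Riemannian setting'' does not settle this: a transverse metric only reduces the structure group to $O(q)$, and even transverse orientability of $\mathcal{F}$ does not force the holonomy to act orientation-preservingly on the descending directions alone. Concretely, take the Seifert fibration over the pillowcase orbifold $T^{2}/\{\pm 1\}$ and pull back the invariant Morse function $\cos 2\pi x+\cos 2\pi y$: the four critical leaves are isolated closed leaves with $\nu_0=\nu_2=1$, $\nu_1=2$, while $b_0=b_2=1$, $b_1=0$, so the naive cell-attachment count would violate both the third inequality and the Euler characteristic identity. Thus your local lemma, as stated, is false in general; to complete the argument you must either prove that under the intended hypotheses the holonomy preserves the orientation of the negative normal bundle (which does hold in this paper's applications, where the isotropy acts complex-linearly on the slice), or define $\nu_j$ as the dimension of the local relative basic cohomology, which is how the pseudogroup framework of \cite{Al93} handles the matter. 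Everything downstream of the local computation in your sketch is fine.
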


\begin{prop}\label{perfect-morse}
Let $f=\Phi^{\xi}$ for a generic element $\xi\in\mathfrak{t}$.
Under the assumptions (\textbf{A1}), (\textbf{A2}) and (\textbf{A3}), $f$ must be a perfect basic Morse--Bott function, that is, $b_j(M,\mathcal{F})=\nu_j(f)$, for all $1\leq j\leq q$.
\end{prop}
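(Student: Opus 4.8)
The plan is to exhibit $f=\Phi^{\xi}$ as a \emph{lacunary} basic Morse--Bott function and then feed this into the basic Morse inequalities of Theorem~\ref{Morse-inequality}. First I would check that $f$ lies within the scope of that theorem. Since $\xi$ is generic, Lemma~\ref{generic-choice} identifies $\mathrm{Crit}(\Phi^{\xi})$ with the fixed-leaf set $X$, and assumption (\textbf{A2}) guarantees that $X$ consists of finitely many isolated closed leaves. Moreover $\Phi^{\xi}=\langle\Phi,\xi\rangle$ is a basic function, because for every $Y\in\mathfrak{X}(\mathcal{F})$ one has $\iota(Y)d\Phi^{\xi}=\iota(Y)\iota(\xi_{M})\omega=-\iota(\xi_{M})\iota(Y)\omega=0$, using $T\mathcal{F}=\ker\omega$. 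By \cite[Theorem 3.4.5]{LS17} this basic function is Morse--Bott with all indices even, the point being that the negative transverse normal space along each critical leaf is transversely symplectic, hence of even rank. Combined with the homological orientability of the Riemannian foliation in (\textbf{A3}), this places $f$ exactly in the setting where Theorem~\ref{Morse-inequality} applies.

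The decisive consequence of the even-index property is that $\nu_{j}(f)=0$ for every odd $j$, and I would then run the standard lacunary argument. Writing the strong Morse inequalities of Theorem~\ref{Morse-inequality} in the cumulative form $Q_{k}:=\sum_{j=0}^{k}(-1)^{k-j}\bigl(\nu_{j}(f)-b_{j}(M,\mathcal{F})\bigr)\geq 0$, one reads off the recursion $Q_{k}+Q_{k-1}=\nu_{k}(f)-b_{k}(M,\mathcal{F})$ with $Q_{-1}=0$. For odd $k$ the right-hand side equals $-b_{k}(M,\mathcal{F})\leq 0$, whereas $Q_{k},Q_{k-1}\geq 0$; therefore $Q_{k}=Q_{k-1}=0$ for every odd $k$. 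As $k$ ranges over the odd integers the pairs $\{k-1,k\}$ sweep out every index, so $Q_{k}=0$ for all $k$, and the recursion then gives $b_{k}(M,\mathcal{F})=\nu_{k}(f)$ for all $k$. Thus $f$ is perfect.

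The main obstacle is not the combinatorial lacunary step, which is routine once the even-index pattern $\nu_{\mathrm{odd}}=0$ is secured, but rather verifying that the three imported ingredients genuinely hold simultaneously: that $\Phi^{\xi}$ is Morse--Bott \emph{with even indices} in the foliated sense (from the transversely symplectic structure via \cite{LS17}), that its critical set is precisely the finite family of isolated closed leaves (from genericity and (\textbf{A2})), and that \'Alvarez's basic Morse inequalities \cite{Al93} are available under (\textbf{A3}). The even-index property is the linchpin, and I would take particular care to confirm that the negative normal bundles along the critical leaves are transversely symplectic, since this is what simultaneously forces the indices to be even and guarantees the orientability needed for the real-coefficient basic Morse inequalities.
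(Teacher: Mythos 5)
Your proof is correct, but it takes a genuinely different route from the paper's. The paper does not run the lacunary principle off $\nu_{\mathrm{odd}}(f)=0$; instead it first kills the odd basic Betti numbers cohomologically: since $X$ is a finite union of closed leaves, $H^{\mathrm{odd}}_{T}(X,\mathcal{F})=0$, so Kirwan injectivity (Theorem \ref{thm1}) forces $H^{\mathrm{odd}}_{T}(M,\mathcal{F})=0$, and equivariant formality (Theorem \ref{eq-formality}) then yields $H^{\mathrm{odd}}(M,\mathcal{F})=0$; only at the very end are the Morse inequalities used to pass from $b_{\mathrm{odd}}=\nu_{\mathrm{odd}}=0$ to perfection. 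Your argument instead extracts everything from the even-index property of $\Phi^{\xi}$ (\cite[Theorem 3.4.5]{LS17}) fed into Theorem \ref{Morse-inequality} via the standard lacunary recursion, obtaining $b_{\mathrm{odd}}(M,\mathcal{F})=0$ as an output rather than an input. In particular you never invoke Theorem \ref{thm1} or Theorem \ref{eq-formality}, so your proof does not actually use the transverse hard Lefschetz hypothesis (\textbf{A1}). What the paper's route buys is the stronger intermediate statement $H^{\mathrm{odd}}_{T}(M,\mathcal{F})=0$ and coherence with the paper's theme of deriving consequences from basic Kirwan injectivity; what yours buys is economy of hypotheses and a self-contained Morse-theoretic argument. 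The one point requiring care in your version is the same one the paper implicitly relies on: that \cite[Theorem A]{Al93} is applicable with real coefficients, i.e., the orientability of the negative normal bundles along the isolated closed critical leaves, which you correctly flag as the linchpin.
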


\begin{proof}
It suffices to show that $b_{2k-1}(M,\mathcal{F})=0$, for any $1\leq k\leq [\frac{q+1}{2}]$.
Let $X$ be the set of fixed leaves as before.
Note that the restriction homomorphism (\ref{restriction-homo}) maps $H^{\text{odd}}_{T}(M,\mathcal{F})$ into $H^{\text{odd}}_{T}(X,\mathcal{F})$, which by assumption (\textbf{A2}) must vanish.
It follows immediately from
Theorem \ref{thm1} that
$H^{\text{odd}}_{T}(M,\mathcal{F})=0$.
However, by Theorem \ref{eq-formality} we have that $H^{\text{odd}}_T(M,\mathcal{F})=S\mathfrak{t}^*\otimes H^{\text{odd}}(M,\mathcal{F})$.
Therefore all the odd dimensional basic Betti numbers must be zero, from which Proposition \ref{perfect-morse} follows.
\end{proof}

Proposition  \ref{perfect-morse} has the following easy consequence.

\begin{cor}\label{basic-euler}
Consider the clean Hamiltonian action of a compact torus $T$ on a transversely symplectic foliation $\mathcal{F}$.
Suppose that it satisfies the assumptions (\textbf{A1}), (\textbf{A2}) and (\textbf{A3}).
Then we have that
\begin{itemize}
\item[(i)] $b_{2i+1}(M,\mathcal{F})=0$;
\item[(ii)] $b_{2i}(M,\mathcal{F})$ coincides with the total number of vertices with index $2i$  in the moment polytope $\Delta$;
\item[(iii)] the basic Euler characteristic number $\chi(M,\mathcal{F})$, given by
\[\chi(M,\mathcal{F})=
\displaystyle\sum_{i=0}^{\mathrm{codim}(\mathcal{F})}
(-1)^{i}b_i(M,\mathcal{F}),\]
equals the total number of vertices in the moment polytope $\Delta$.
\end{itemize}
\end{cor}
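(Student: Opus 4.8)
The plan is to deduce all three statements directly from Proposition \ref{perfect-morse} together with the index-preserving dictionary between the critical submanifolds of $\Phi^{\xi}$ and the vertices of $\Delta$ that was recorded just before Definition \ref{vertex-index}. I would fix a generic element $\xi\in\mathfrak{t}$ and work throughout with the basic Morse-Bott function $f=\Phi^{\xi}$.

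For part (i), there is essentially nothing new to prove: the proof of Proposition \ref{perfect-morse} already established that $H^{\mathrm{odd}}(M,\mathcal{F})=0$, which is precisely the statement $b_{2i+1}(M,\mathcal{F})=0$. So I would simply invoke that. For part (ii), I would use the other half of Proposition \ref{perfect-morse}, namely that $f$ is a \emph{perfect} basic Morse-Bott function, so that $b_{2i}(M,\mathcal{F})=\nu_{2i}(f)$. By assumption (\textbf{A2}) the critical submanifolds of $f$ are isolated closed leaves, and by Lemma \ref{generic-choice} together with \cite[Theorem 3.5.1]{LS17} these are in one-to-one correspondence with the vertices of $\Delta$. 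By Definition \ref{vertex-index}, a vertex $\lambda$ has index $2i$ exactly when the corresponding critical leaf $\Phi^{-1}(\lambda)$ has Morse index $2i$; hence $\nu_{2i}(f)$ equals the number of vertices of index $2i$, which gives (ii).

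For part (iii), I would first note that by \cite[Theorem 3.4.5]{LS17} the function $\Phi^{\xi}$ is Morse-Bott with even indices, so every vertex has even index and $\nu_j(f)=0$ for all odd $j$. Combining (i) and (ii), the basic Euler characteristic collapses to $\chi(M,\mathcal{F})=\sum_i b_{2i}(M,\mathcal{F})=\sum_i \nu_{2i}(f)$, which counts all the critical leaves, i.e. all the vertices of $\Delta$. Alternatively, one may read this off immediately from the Euler-characteristic equality $\sum_j(-1)^j b_j(M,\mathcal{F})=\sum_j(-1)^j\nu_j(f)$ in Theorem \ref{Morse-inequality}, since all the $\nu_j(f)$ with $j$ odd vanish.

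I do not expect a serious obstacle here, as the real work has been front-loaded into Proposition \ref{perfect-morse} and Lemmas \ref{generic-choice} and \ref{faces}. The only points requiring a line of care are confirming that the correspondence between critical leaves and vertices is index-preserving and that every index is even, both of which are already in place; everything else is bookkeeping with the Morse polynomial.
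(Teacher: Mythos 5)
Your proposal is correct and matches the paper's intent exactly: the paper presents this corollary as an ``easy consequence'' of Proposition \ref{perfect-morse}, relying on precisely the ingredients you cite --- the vanishing of odd basic Betti numbers from that proposition's proof, the perfectness giving $b_{2i}=\nu_{2i}(f)$, the index-preserving bijection between critical leaves and vertices recorded after Lemma \ref{faces}, and the evenness of indices from \cite[Theorem 3.4.5]{LS17}. No gaps.
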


%======================================
\section{Main example: symplectic toric quasifolds}\label{quasifolds}

In this section, we apply the tools developed in Section \ref{general-theory} to determine the basic Hodge numbers and basic Betti numbers of a symplectic toric quasifold.

\subsection{Symplectic toric quasifolds I: basic Hodge numbers}\label{quasifold-1}

We begin with a quick review of the definition of a symplectic toric quasifold.
Let $V$ be an $m$ dimensional vector space,  and $\Delta\subset V^*$ an $m$ dimensional simple convex polytope given by
\[
\Delta=\bigcap^{d}_{j=1}
\bigl\{\mu\in V^{*}\,|\,
\langle\mu,X_{j}\rangle\geq\lambda_{j}\bigr\},\]
where $X_{j}\in V$ and $\lambda_{j}$'s are real numbers.
Here $\Delta$ being simple means that at any vertex there are exactly $m$ edge vectors.
Now consider the standard action of torus $T^d$ on $\mathbb{C}^d$ given by
$$
(e^{2\pi i\theta_1}, \cdots, e^{2\pi i\theta_d})\cdot(z_1,\cdots, z_d)=(e^{2\pi i\theta_1}z_1,\cdots, e^{2\pi i\theta_d}z_d).
$$
This action is Hamiltonian relative to the standard symplectic form
$\omega_0= \dfrac{\sqrt{-1}}{2}\sum_{j=1}^d dz_j\wedge d\overline{z}_j$ on $\mathbb{C}^d$,
with a moment map
\begin{equation}\label{moment-map}
\Phi:\mathbb{C}^d\longrightarrow (\mathbb{R}^d)^*, \,(z_1,\cdots, z_d)\longmapsto -\frac{1}{2}\displaystyle \sum_{j=1}^d \vert z_j\vert^2 e_j^*+\lambda,
\end{equation}
where $\lambda\in (\mathbb{R}^d)^*$ is a constant vector, and $e_1^*, \cdots, e_d^*$ are the dual basis of the standard
basis $e_1,\cdots, e_d$ in $\mathbb{R}^d$.
Let $N$ be an immersed subgroup of $T^d$ whose Lie algebra $\mathfrak{n}$ is the kernel of the surjective map given by
\begin{equation}\label{proj}
\pi:\mathbb{R}^d\longrightarrow \mathbb{R}^m, \,\,\,
e_j\longmapsto X_j.
\end{equation}
Then the induced action of $N$ on $\mathbb{C}^d$ is also Hamiltonian with a moment map
$\Phi_N: \mathbb{C}^d\rightarrow \mathfrak{n}^*$ given by the composition of the map (\ref{moment-map}) with the natural projection map $(\mathbb{R}^d)^*\rightarrow \mathfrak{n}^*$.

It is straightforward to check that $M=\Phi_N^{-1}(0)$ is compact. The quotient space $M/N$ is defined by Prato \cite{Pr01} to be a \emph{symplectic toric quasifold}. Note that the infinitesimal action of $\mathfrak{n}$ on $M$ is free
and generates a transversely symplectic foliation $\mathcal{F}$. Clearly, when the group $N$ is connected,
the leaf space of the transversely symplectic foliation $\mathcal{F}$ coincides with $M/N$; however, in general they are different from each other. To apply the machinery developed in the paper and derive some cohomological information on the
symplectic toric quasifold $M/N$, one possible approach is to apply \cite[Propsition 4.3]{BZ17} to obtain a representation of $M/N$ as a Riemannian foliation. Here we take another approach by regarding $M/N$ as a diffeological space with a quotient diffeology inherited from that of $M$. By Corollary \ref{main-iso2},  the diffeological De Rham cohomology of $M/N$ is naturally isomorphic to $H(M, \mathcal{F})$. Therefore without loss of generality, from now on we will assume that $N$ is connected and $M/N$ coincides with $M/\mathcal{F}$. We refer the interested reader to \cite{I13} for a detailed exposition on diffeology.  We also note that a more recent work \cite{IP21} has shown that every quasi-fold has a representation as a diffeological space.

Clearly, the induced action of $T^d$ on the transversely symplectic foliation $(M,\mathcal{F})$ is Hamiltonian.
Indeed, the restriction of $\Phi$ to $M$ is a moment map for the induced action of $T^d$ on $(M,\mathcal{F})$, which by abuse of notation we also denote by $\Phi$.
Let
$\pi^*: (\mathbb{R}^m)^*\rightarrow (\mathbb{R}^d)^*$ be the dual of the map (\ref{proj}).
Then the image of $M$ under the moment map $\Phi$ is given by $\pi^*(\Delta)$.

\begin{lem} The foliation $\mathcal{F}$ is transversely K\"ahler and Killing. As a result, $\mathcal{F}$ must be taut.
\end{lem}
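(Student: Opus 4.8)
The plan is to realize $\mathcal{F}$ as a K\"ahler reduction of the flat K\"ahler structure $(g_0,J_0,\omega_0)$ on $\mathbb{C}^d$, for which $T^d$, and hence its connected subgroup $N$, acts by holomorphic isometries preserving $\omega_0$. Fix $p\in M=\Phi_N^{-1}(0)$. Since $0$ is a regular value and $N$ acts freely on $M$, the moment map relation $\iota(\xi_{\mathbb{C}^d})\omega_0=d\langle\Phi_N,\xi\rangle$ gives $T_pM=(\mathfrak{n}_{\mathbb{C}^d})^{\omega_0}=(J_0\mathfrak{n}_{\mathbb{C}^d})^{\perp_{g_0}}$, while $T_p\mathcal{F}=\mathfrak{n}_{\mathbb{C}^d}$. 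Because $N$ is abelian and $\Phi_N$ is $N$-invariant, the $N$-orbits on $M$ are isotropic, so $\mathfrak{n}_{\mathbb{C}^d}$ and $J_0\mathfrak{n}_{\mathbb{C}^d}$ are $g_0$-orthogonal, and I would verify by a dimension count the orthogonal splitting $T_pM=\mathfrak{n}_{\mathbb{C}^d}\oplus W_p^{\perp}$, where $W_p=\mathfrak{n}_{\mathbb{C}^d}\oplus J_0\mathfrak{n}_{\mathbb{C}^d}$. As $W_p$ is $J_0$-invariant, so is $W_p^{\perp}$, and restricting $(g_0,J_0,\omega_0)$ turns the normal space $Q_p=T_pM/T_p\mathcal{F}\cong W_p^{\perp}$ into a Hermitian vector space.

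Since $N$ acts by holomorphic isometries, these fibrewise data are invariant along the leaves, so they assemble into a transverse metric $g$, a transverse almost complex structure $\mathcal{J}$, and the reduced transversely symplectic form, compatible as in Definition \ref{kahler-foliation}. The only nontrivial point is the integrability of $\mathcal{J}$, which I would deduce from that of the ambient $J_0$ by the standard foliated K\"ahler-reduction argument; this is exactly what is invoked in the proof of Theorem \ref{flag-mfld}, and I would record it by citing \cite[Proposition 6.2]{Lin18}. This shows $\mathcal{F}$ is transversely K\"ahler. For the Killing property, let $\overline{N}$ be the closure of $N$ in $T^d$, a closed subtorus. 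The level set $M$ is $T^d$-invariant, hence $\overline{N}$-invariant, $\overline{N}$ acts isometrically while preserving $\mathcal{F}$, and the closure of each leaf $N\cdot x$ is the compact orbit $\overline{N}\cdot x$. Choosing a linear complement $\mathfrak{a}$ to $\mathfrak{n}$ in $\mathrm{Lie}(\overline{N})$, the fundamental vector fields of $\mathfrak{a}$ project to \emph{global} transverse Killing fields whose flows sweep out the leaf closures, so the Molino sheaf of $\mathcal{F}$ is the constant sheaf modeled on the abelian algebra $\mathrm{Lie}(\overline{N}/N)$; this is precisely the assertion that $\mathcal{F}$ is Killing.

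Finally, homological orientability follows from Lemma \ref{molino-sheaf}: for a Killing foliation the Molino sheaf is globally constant with abelian fibre, so its top exterior power is a trivial real line bundle and admits a nowhere-vanishing global section. I expect the main technical obstacle to be the integrability step in the second paragraph—confirming that the fibrewise Hermitian data descend to a genuinely integrable transverse complex structure, i.e.\ that the transverse Nijenhuis tensor vanishes, rather than merely a transverse \emph{almost} complex structure. Since this is the foliated analogue of ordinary K\"ahler reduction, already used in the proof of Theorem \ref{flag-mfld}, it is handled by \cite[Proposition 6.2]{Lin18}; the remaining steps are routine linear algebra and standard Molino theory.
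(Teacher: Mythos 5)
Your proposal is correct and follows essentially the same route as the paper: the transversely K\"ahler structure is obtained by foliated K\"ahler reduction and ultimately justified by citing \cite[Proposition 6.2]{Lin18}, the Killing property comes from the dense pair $(N,\overline{N})$ exactly as in the paper's appeal to Proposition \ref{killing} (whose proof in the appendix supplies the commuting-with-all-transverse-fields verification you gesture at), and homological orientability follows from Lemma \ref{molino-sheaf}. The extra linear-algebra detail on the reduction is a harmless expansion of the same argument.
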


\begin{proof} The assertion that $\mathcal{F}$ is transversely K\"ahler follows from \cite[Proposition 6.2]{Lin18}.
The fact that $\mathcal{F}$ is a Killing foliation follows from \cite[Example 4.3]{GT10}. So it follows from Theorem \ref{molino-sheaf} that $\mathcal{F}$ is also taut.
\end{proof}

In view of Corollary \ref{main-iso2}, we will simply define the Hodge number $h^{p, q}$ of the symplectic toric quasifold $M/N$ to be $h^{p,q}(M, \mathcal{F})$.  An easy application of the presymplectic slice theorem \cite[Theorem C.5.1]{LS17} shows that the Hamiltonian $T^d$-action on
$(M,\mathcal{F})$ has only finitely many fixed leaves.
As an immediate consequence of Corollary \ref{isolated-fixed-leave}  we have the following result.
\begin{cor} \label{vanishing-hodge-number}
Let $(M,\mathcal{F})$ be the transversely K\"ahler foliation constructed out of a non-rational simple polytope $\Delta$ as above.
Then we have that  $h^{p,q}(M,\mathcal{F})=0$ when $p\neq q$.
\end{cor}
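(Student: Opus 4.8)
The plan is to recognize Corollary \ref{vanishing-hodge-number} as a direct application of the already-established Theorem \ref{thm2} (the foliated Carrell-Lieberman theorem) together with the structural results on symplectic toric quasifolds assembled just above the statement. First I would verify that the setup satisfies all the hypotheses of Theorem \ref{thm2}: by the preceding lemma, the foliation $\mathcal{F}$ arising from the non-rational simple polytope $\Delta$ is transversely K\"ahler, Killing, and hence homologically orientable, and $M=\Phi_N^{-1}(0)$ is compact. The induced action of $T^d$ on $(M,\mathcal{F})$ is Hamiltonian, and since it preserves the transverse K\"ahler (in particular transverse complex) structure, it is holomorphic in the sense of Definition \ref{def5.3}. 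Thus Theorem \ref{thm2} applies verbatim to $G=T^d$.

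The crux is the dimension bound on the fixed-leaf set. By the remark immediately preceding the corollary, an easy application of the presymplectic slice theorem \cite[Theorem C.5.1]{LS17} shows that the Hamiltonian $T^d$-action on $(M,\mathcal{F})$ has only finitely many fixed leaves; that is, each connected component $X_i$ of the fixed-leaf set $X$ is an isolated closed leaf. An isolated closed leaf is itself the closure of a single leaf, so the restricted foliation $\mathcal{F}\vert_{X_i}$ has complex transverse codimension zero, whence $\mathrm{codim}_{\mathbb{C}}(\mathcal{F}\vert_X)=0$. Feeding this into the conclusion of Theorem \ref{thm2}, the vanishing $H^{p,q}(M,\mathcal{F})=0$ holds for all $|p-q|>0$, i.e.\ for all $p\neq q$. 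Passing from the basic Dolbeault cohomology groups to the basic Hodge numbers $h^{p,q}(M,\mathcal{F})=\dim_{\mathbb{C}}H^{p,q}(M,\mathcal{F})$ is immediate.

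In fact this is exactly the content already isolated as Corollary \ref{isolated-fixed-leave}, which specializes Theorem \ref{thm2} to the case of finitely many invariant leaves and concludes $H^{p,q}(M,\mathcal{F})=0$ for $p\neq q$. So the cleanest route is simply to invoke Corollary \ref{isolated-fixed-leave}, having checked that our quasifold data $(M,\mathcal{F},T^d,\Phi)$ meets its three standing hypotheses: transverse K\"ahler and homologically orientable (the lemma above), Hamiltonian and holomorphic $T^d$-action (the construction of $\Phi$ together with \cite[Proposition 6.2]{Lin18}), and finitely many invariant leaves (the presymplectic slice theorem). I do not expect any genuine obstacle here; the work has all been done in Section \ref{general-theory}, and the only thing to confirm carefully is that the finiteness of the fixed-leaf set, which is the sole nontrivial input special to the quasifold, is correctly supplied by the slice theorem. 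This at once settles Battaglia and Zaffran's conjecture that the basic Hodge numbers of a symplectic toric quasifold are concentrated on the main diagonal.
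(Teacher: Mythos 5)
Your proposal is correct and follows exactly the paper's route: the paper likewise deduces the corollary by noting that the presymplectic slice theorem gives finitely many fixed leaves and then invoking Corollary \ref{isolated-fixed-leave} (itself a specialization of Theorem \ref{thm2}), with the homological orientability and transverse K\"ahler hypotheses supplied by the preceding lemma. Your extra verification that an isolated closed leaf gives $\mathrm{codim}_{\mathbb{C}}(\mathcal{F}\vert_X)=0$ is just an unpacking of that corollary, not a different argument.
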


\begin{rem}
Corollary \ref{vanishing-hodge-number} provides a positive answer to a conjecture raised by Battagalia and Zaffran in \cite[Page 11812]{BZ15}. We refer to \cite[Section 6]{Lin18} for a different approach outlined there using the compatibility of the Kirwan map with the Hodge structures.
\end{rem}

%=========================================
\subsection{Symplectic toric quasifolds II: Betti numbers of diffeological De Rham cohomologies}

We will keep the same assumptions and notations as in Section \ref{quasifold-1}.
It is straightforward to check that the action of $T$ on $(M,\mathcal{F})$ is clean; moreover, the null subgroup is precisely the connected immersed normal subgroup $N$ as constructed in Section \ref{quasifold-1}.
The following result is a refinement of Corollary \ref{basic-euler} in the case of symplectic toric quasifolds.

\begin{thm}\label{Betti-numbers}
Let $M/N$ be the symplectic toric quasifold constructed in Section \ref{quasifold-1}, and let $b_i$ be the Betti number of its $i$-th diffeological De Rham cohomology. Then we have that
\begin{itemize}
  \item [(i)] $b_{2k+1}=0$;
  \item [(ii)] $b_{2k}=h_k$, where $(h_1,\cdots, h_m)$ is the $h$-vector of the $m$-dimensional simple polytope $\Delta$.
\end{itemize}
\end{thm}

\begin{proof} Assertion (i) is an immediate consequence of Proposition \ref{perfect-morse}.
To show Assertion (ii), we first note that since $\Delta$ is a simple polytope, its $h$-vector is given by
\begin{equation}\label{h-vector}
h_k=\displaystyle\sum_{i=0}^k (-1)^{k-i}\left(\begin{matrix}  m-i\\ m-k\end{matrix}\right)f_{m-i},
\end{equation}
where $f_j$ is the number of $j$-dimensional faces in $\Delta$ (cf. \cite{Ba11})
\footnote{ The definition used here is the dual version of that for a simplicial  polytope, see \cite[Definition 8.18]{Zi95}.}.
Fix a generic element $\xi\in \mathfrak{t}$.
We divide the rest of the proof into two steps.

\paragraph{\textbf{Step 1}}
Let $F\subset \Delta$ be a non-empty $(m-i)$-dimensional face, and let $\lambda_F$ be the unique vertex of $F$ with the minimal index.
We will show that
$\mathrm{ind}_{\xi}(\lambda_F)\leq 2i$.
Choose $x \in \Phi^{-1}(\lambda_F)$. Note that the leaf though $x$ is isolated and can be naturally identified with $T^d/H$, where $H$ is the isotropy subgroup of $T^d$ at $x$.
By the presymplectic slice theorem \cite[Theorem C.5.1]{LS17}, we may assume that $x$ has a $T^d$-invariant open neighborhood in $M$ given by $\mathfrak{M}= T^d\times^H S$,  where $S=T_xM/T_x\mathcal{F}$ is a symplectic vector space.
Since all fixed leaves are isolated in our case, a simple dimension count shows that $\text{dim}(S)=2\text{dim}(H)=2m$.
Moreover, let $\mathfrak{h}=\text{Lie}(H)$,  let $\mathfrak{t}=\text{Lie}(T^d)$, and identify $\mathfrak{t}^*$ with $(\mathfrak{t}/\mathfrak{h})^*\times \mathfrak{h}^*$. Then the restriction of $\Phi$ to $\mathfrak{M}$ is given by
\[
\Phi([g, v])=\lambda -\frac{1}{2}\displaystyle \sum_{j=1}^m (x_j^2+y_j^2)\cdot w_j,\,\,\, \mathrm{for\,\,\,all}\,\,g\in T^d\,\,\,
\mathrm{and}\,\,v\in S,
\]
where $(x_1,y_1,\cdots,x_m,y_m)$ are Darbourx coordinates of $v$ on $S$, and $w_1,\cdots, w_m \in \mathfrak{h}^*\subset\mathfrak{t}^*\cong (\mathbb{R}^d)^*
$ are weight vectors.
It follows that there is a neighborhood $U$ of the vertex $\lambda_F\in (\mathbb{R}^d)^*$ such that
$U\cap \Delta$ is the convex cone with vertex $\lambda_F$ and edge vectors $w_1,\cdots, w_m$, and such that
\begin{equation}\label{morse-lemma}
\Phi^{\xi}([g, v])=\langle\lambda, \xi\rangle-\frac{1}{2}\cdot\displaystyle
\sum_{j=1}^m\langle w_j,\xi\rangle\cdot(x_j^2+y_j^2).
\end{equation}
Since $x$ is a local minimum of the restriction of $\Phi^{\xi}$ to $\Phi^{-1}(F)$, and since $F$ is $(m-i)$-dimensional, (\ref{morse-lemma}) implies that $\mathrm{ind}_{\xi}(\lambda_F)$ is at most $2i$.

\paragraph{\textbf{Step 2}}
We deduce that $b_{2k}(M,\mathcal{F})=h_k$ from (\ref{h-vector}).
In what follows, for a finite set $E$ we will denote by $\bigl\vert E\bigr\vert$ its cardinality.
For all $0\leq k\leq m$, let
\[
\begin{split}&
A_k=\bigl\{ F \,\vert\, F \,\text{is an}\, (m-k)-\text{\,dimensional face of }\,\Delta\,\text{ such that}\, \text{ind}_{\xi}(\lambda_F)=2k\bigr\},\\
&B_k=\bigl\{ F \,\vert\, F \,\text{is an}\, (m-k)-\text{\,dimensional face of }\,\Delta\,\text{ such that}\, \text{ind}_{\xi}(\lambda_F)<2k\bigr\}.
\end{split}\]
By definition, we have that $f_{m-k}=\bigl\vert A_k\bigr\vert+\bigl\vert B_k\bigr\vert$.
Let $F_1$ and $F_2$ be two different faces in $A_k$. Since $\Delta$ is simple, it follows easily from (\ref{morse-lemma}) that $\lambda_{F_1}$ and $\lambda_{F_2}$ must be two different vertices of $\Delta$ with index $2k$. Conversely, for any vertex
$\nu\in \Delta$ with index $2k$, there is an $(m-k)$-dimensional face $F$ of $\Delta$ such that $\nu=\lambda_F$.
So by Corollary \ref{basic-euler}
\begin{equation}\label{step1}
b_k(M,\mathcal{F})=\bigl\vert A_k\bigr\vert.
\end{equation}
In order to calculate $\bigl\vert B_k\bigr\vert$, we introduce a partition of the set $B_k$ as follows.
Let $\{\sigma_1,\cdots,\sigma_p\}$ be the collection of all $(m-k+1)$-dimensional faces of $\Delta$.
Define
\[
B_k^j=\bigl\{ F\in B_k\,\vert\, F\subset \sigma_j,\,\, \lambda_F=\lambda_{\sigma_j}\bigr\},\,\,
\mathrm{for\,\,\,all}\,\, 1\leq j\leq p.
\]
By definition we have that $B_k=\bigcup_{j=1}^p B_k^j$, and that
\begin{equation}\label{intersection} B_k^{i_1}\cap\cdots\cap B_k^{i_s}=
\bigl\{ F\in B_k\,\vert\, F\subset \sigma_{i_j},\,\, \lambda_F=\lambda_{\sigma_{i_j}},\,\,
\mathrm{for\,\,\,all}\,\,\, 1\leq j\leq s\bigr\},
\end{equation}
where $1\leq i_1< \cdots<i_s\leq p$.
Since the moment polytope $\Delta$ is simple, for any sequence of integers $1\leq i_1< \cdots<i_s\leq p$, the set (\ref{intersection}) is either empty or has exactly one element.
If (\ref{intersection}) is non-empty, then there exists a unique $(m-k+s)$-dimensional face $\tau$, such that for the unique face $F\in B_k^{i_1}\cap\cdots\cap B_k^{i_s}$, $\lambda_F=\lambda_{\tau}$.
Conversely, let $\tau$ be an $(m-k+s)$-dimensional face of $\Delta$.
Then $\tau$ has $m-k+s$ edge vectors at its vertex with the  minimal index $\lambda_{\tau}$, any choice of $m-k$ edge vectors from which uniquely determines a face $F$ and a sequence of integers
$1\leq i_1\cdots<i_s\leq k$ such that
$F\in B_k^{i_1}\cap\cdots\cap B_k^{i_s}$.
It follows that
\[
\displaystyle\sum_{1\leq i_1<\cdots <i_s\leq k}\bigl\vert B_k^{i_1}\cap\cdots\cap B_k^{i_s}\bigr\vert
=\binom{ m-k+s}{m-k} f_{m-k+s}.
\]
Therefore by the \emph{Inclusion-Exclusion Principle} in elementary combinatorics
\begin{eqnarray}\label{step2}
% \nonumber to remove numbering (before each equation)
  \bigl\vert B_k\bigr\vert
  &=&
  \sum_{i=1}^p\bigl\vert B_k^i\bigr\vert+\cdots +(-1)^{s-1}\sum_{1\leq i_1<\cdots <i_s\leq p}\bigl\vert B_k^{i_1}\cap \cdots \cap B_k^{i_s}\bigr\vert \nonumber\\
  & & +\cdots+(-1)^{k-1}\sum_{1\leq i_1<\cdots <i_k\leq p}\bigl\vert B_k^{i_1}\cap\cdots\cap B_k^{i_k}\bigr\vert \nonumber\\
  &=&
  \displaystyle\sum_{s=1}^{k}(-1)^{s-1}\binom{ m-k+s}{m-k} f_{m-k+s}.
\end{eqnarray}

Combining (\ref{h-vector}), (\ref{step1}), and (\ref{step2}) we conclude that $b_{2k}(M,\mathcal{F})=h_k$. This completes the proof of Theorem \ref{Betti-numbers}.
\end{proof}

%=====================================

\appendix \section{Transversely K\"ahler submanfiolds}

In this appendix we apply the transverse integration introduced by
Sergiescu to prove a foliation analogue of the classical Wirtinger theorem in K\"ahler geometry. We refer to \cite{Mo88}, \cite{Ser85} and \cite{LS18} for a detailed account on transverse integration and Molino's structure theory of Riemannian foliations. In particular, we need the following characterization
by Sergiescu on when
a Riemannian foliation is taut.

\begin{thm} \label{molino-sheaf}(\cite{Ser85})
Let $\mathcal{F}$ be a Riemannian foliation on a compact manifold.
Then $\mathcal{F}$ is taut if and only if the top wedge power of its Molino sheaf has a nowhere vanishing global section.
\end{thm}

\begin{prop} \label{submfld}
Suppose that $\mathcal{F}$ is a transversely K\"ahler foliation on a compact manifold $M$ equipped with a transversely K\"ahler form $\omega$ and a transversely K\"ahler metric $g$, that $\mathcal{F}$ is taut, and that $ X$ is a closed saturated submanifold of $M$ such that $(\omega\vert_X, g\vert_X)$ is transversely K\"ahler with respect to $\mathcal{F}\vert_X$.   Let
the co-dimensions of $\mathcal{F}$ in $M$ and $\mathcal{F}\vert_X$ in $X$ be $2n$ and $2k$ respectively. Then
 $[\omega^k\vert_X]$ defines a non-trivial basic cohomology class in $H^{2k}(X, \mathcal{F}\vert_X)$. In particular, $(X, \mathcal{F}\vert_X)$ is also taut.
\end{prop}

\begin{proof} Let $\pi: P\rightarrow M$ be the corresponding transverse orthonormal frame bundle on $M$ associated to the transversely K\"ahler metric $g$, which has a structure group $SO(2n)$.Then the foliation $\mathcal{F}$ naturally lifts to a transversely parallizable foliation $\mathcal{F}_P$ on $P$.  As a result there is a locally trivial fibration $\rho: P\rightarrow W$ whose fibers are leaf closures of $\mathcal{F}_P$.

A transverse Riemnannian metric on $P$ can be described as follows. In our situation the transverse Levi-Civitta connection $\theta_{LC}$ defines a basic one form on $P$ that takes values in the Lie algebra
 $\mathfrak{t}=so(2n)$. Choose an invariant inner product on $\mathfrak{t}$. Let
$f_1, \cdots, f_r$ be an oriented orthonormal basis in the dual of $\mathfrak{t}$, and let $\alpha_i= \theta^*_{LC}(f_i)$, $\forall\, 1\leq i \leq r$.  Then $\alpha_1, \cdots, \alpha_r$ are $\mathcal{F}_P$ basic $1$-forms such that
$ g_P=\pi^* g+ \displaystyle \sum_{i=1}^r\alpha_i \otimes \alpha_i$
is a transverse Riemnannian metric on $P$.

Clearly,  $X_P:=\pi^{-1}(X)$ is a closed saturated submanifold of $P$,  and carries a transverse Riemannian metric $g_{X_P}$ which is the restriction of $g_P$ to $X_P$. Note that by Lemma \ref{kahler-form}, $\frac{1}{k!} \omega^k\vert_X$ is a transverse volume form on $X$ associated to the transverse metric $g\vert_X$. It follows that when restricting to $X_P$, $\frac{1}{k!} \pi^*\omega^k \wedge \alpha_1\wedge\cdots \wedge \alpha_r$ is a transverse volume form associated to the transverse Riemannian metric $g_{X_P}$. In particular, it is a no-where vanishing basic form of top degree. Now that $M$ is taut, Theorem \ref{molino-sheaf} implies that there is a no-where vanishing multi-vector $ s= v_1\wedge \cdots \wedge v_l$ on $P$ representing a global section of the top wedge power of the Molino sheaf of $\mathcal{F}_P$. Here $v_1, \cdots, v_l$ are local transverse vector fields tangent to leaf closures,  and $l=\text{dim}(\overline{\mathcal{F}}_P) -\text{dim}(\mathcal{F})$.  Since $X_P$ is a closed saturated submanifold, at any point $z\in X_P$,
$v_1, \cdots, v_l$ are also tangent to $X_P$.  For simplicity let us still write $s$ for the restriction of $s$ to $X_P$, and similarly for $\pi^*\omega^k$ and $\alpha_i$'s. Since basic forms are invariant under the action of Molino's centralizer sheaf, on $X_P$ the interior product
\begin{equation}\label{interior-product} \iota_s( \pi^*\omega^k \wedge \alpha_1\wedge \cdots \wedge \alpha_r)\end{equation} is basic with respect to leaf closures in $X_P$, and thus descends to a form $\gamma$  of top degree on the submanifold $X_W=\pi_W(X_P)$ of $W$. By definition, the transverse integration of $\omega^k$ over $X$ is
\[ \fint_{X/\mathcal{F}}\, \omega^k:=\int_{X_W}\, \gamma.\]
However it is clear that (\ref{interior-product}) is no-where vanishing on $X_P$. This implies that $\gamma$ is a volume form on $X_W$ and $\fint_{X/\mathcal{F}}\, \omega^k\neq 0$. So by the Stokes' formula for transverse integration $\omega^k$ defines a non-trivial basic cohomology class in $H^{2k}(X, \mathcal{F}\vert_X)$, see \cite[Proposition 6.2.7]{LS18}.
\end{proof}

%=================================================
\section{ De Rham cohomology of a diffeological space}\label{diffeology}

In this appendix, we prove that the diffeological cohomology of the symplectic toric quasifold $M/N$, the construction of which is reviewed in Section \ref{quasifolds}, is naturally isomorphic to the basic cohomology of the transversely symplectic foliation $(M,\mathcal{F})$, where $\mathcal{F}$ is the foliation induced by the action of the Lie algebra of $N$ on $M$.
It is noteworthy though that technically the main results established in Appendix \ref{diffeology} apply in a broader context.
We refer to \cite{Pr01}, \cite{I13}, and \cite{IP21} for a detailed account of background materials on quasifolds and diffeological spaces.

\begin{defn}
Let $X$ be a set.
A \emph{plot} on $X$ is a map $\alpha: U \rightarrow X$ defined on an open subset $U$ of a Euclidean space $\mathbb{R}^n$, $n\geq 0$.
A \emph{diffeology} $\mathcal{D}$ on $X$ is a family of plots on $X$ satisfying the following axioms:
\begin{itemize}
\item[(1)] All constant plots on $X$ are in $\mathcal{D}$.
\item[(2)] Suppose that $V$ and $U$ are open subsets of $\mathbb{R}^m$ and $\mathbb{R}^n$
           respectively, that
           $\alpha$ is a plot in $\mathcal{D}$ defined on $U$, and that $h: V\rightarrow U$ is a smooth map. Then $\alpha\circ h \in \mathcal{D}$.
\item[(3)] Suppose that $\alpha: U\rightarrow X$ is a plot, and that $\forall\, p\in U$, there
           exists an open neighborhood $V_p$ of $p$ contained in $U$, such that $\alpha\vert_{V_p}\in \mathcal{D}$. Then $\alpha \in \mathcal{D}$.
\end{itemize}
If $\mathcal{D}$ is a diffeology on a set $M$, then $(M, \mathcal{D})$ is called a \emph{diffeological space}.
\end{defn}

Let $(M, \mathcal{D})$ be a diffeological space.
We say that a family of plots $\mathcal{C}$ on $M$ is a \emph{generating family}, if any plot $f: U\rightarrow M$ in $ \mathcal{D}$  is either a constant plot, or for each $ x\in U$, there exist a neighborhood $V_x$ of $x$, a plot $g: W\rightarrow M$ in $\mathcal{C}$,  and a smooth map $h: V_x\rightarrow W$, such that $f\vert_{V_x}= g \circ h$.

\begin{ex} (Manifold diffeologies)
Let $\{ (V_i, \phi_i)\}_{i\in I}$ be an atlas on a differentiable manifold $M$.
Then the finest diffeology on $M$ that contains $\mathcal{C}=\{\phi_i^{-1}\}_{i\in I}$ is called the manifold diffeology of $M$.
It is easy to check that $\mathcal{C}$ is a generating family.
\end{ex}

\begin{defn} A map $f: (X, \mathcal{D}) \rightarrow (Y, \mathcal{D}')$ between two diffeological spaces is said to be \emph{smooth}, if
$f\circ \alpha \in \mathcal{D}'$, for any $\alpha\in \mathcal{D}$.
\end{defn}

\begin{defn} \label{forms} Let $(X, \mathcal{D})$ be a diffeological space.
A \emph{differential form} $\gamma$ on $X$ is a map which assigns to each plot $\alpha: U\rightarrow X$ in $\mathcal{D}$ a differential form  $\gamma_{\alpha}$ on $U$, in such a way that satisfies the following compatibility condition
\begin{equation}\label{compatibility} \gamma_{\alpha\circ f}=f^*\gamma_{\alpha},\end{equation}
where $f: \mathbb{R}^m \supset V \rightarrow U$ is a smooth map.
We denote by $\Omega(X, \mathcal{D})$ the space of differential forms on the diffeological space $(X, \mathcal{D})$.
The exterior differential
$d:\Omega^*(X, \mathcal{D})\rightarrow \Omega^{*+1}(X, \mathcal{D})$ is given by the formula $\{\gamma_{\alpha}\} \mapsto \{ d\gamma_{\alpha}\}$.
\end{defn}

Clearly, $d^2 =0$.  We define the De Rham cohomology of the diffeological space $(X, \mathcal{D})$ to be
$ H(X, \mathcal{D})=\text{ker}(d)/\text{im}(d)$.
Now suppose that $f: (X, \mathcal{D})\rightarrow (Y, \mathcal{D}')$ is a smooth map between two diffeological spaces.
Then we have a natural pullback map $f^*:\Omega(Y, \mathcal{D}')\rightarrow \Omega(X, \mathcal{D})$ given by
\[
(f^*\gamma)_{\alpha}:=\gamma_{f\circ \alpha},\,\,\,\forall\, \alpha \in \mathcal{D}.
\]

\begin{defn} \label{quotient}
Let $(X, \mathcal{D})$ be a diffeological space, and let $\sim$ be an equivalence relation on $X$, and let $\pi: X\rightarrow X/\sim$ be the quotient map.
Consider a family of plots on $X/\sim$, denoted by $\mathcal{D}'$,  characterized by the following property:  a plot $\gamma  : U\rightarrow X/\sim$ lies in $\mathcal{D}'$  if and only if either $\gamma$ is a constant plot, or $  \forall\, x\in U$,  there exists a neighborhood $V $ of $x$, and a plot $\alpha \in \mathcal{D}$ defined on $V$, such that
\begin{equation}\label{quotient-plot}
\gamma \vert_V =\pi\circ \alpha.
\end{equation}
Then $\mathcal{D}'$ is a diffeology on $X/\sim$, called the \emph{quotient diffeology}.
It is the finest diffeology on $X/\sim$ such that the quotient map $\pi$ is smooth.
\end{defn}

Now let $(M, \mathcal{F})$ be a foliation of co-dimension $q$, and let $\pi: M\rightarrow M/\mathcal{F}$ be the quotient map from $M$ onto its leaf space.
From now on, we will regard $M/\mathcal{F}$ as a diffeological space with the quotient diffeology $\mathcal{D}'$ induced from the quotient map and the manifold diffeology on $M$.
Following \cite{HMS11}, we explain there is a natural isomorphism $\pi^* :\Omega(M/\mathcal{F},\mathcal{D}')\rightarrow \Omega(M, \mathcal{F})$.
However, the description of the isomorphism we present here is perhaps more explicit, as we are able to avoid the application of pseudogroups by capitalizing on the following definition of a foliation.
By a co-dimension $q$ foliation $\mathcal{F}$ we mean a maximal family of submersions $f_{i}: U_i\rightarrow \mathbb{R}^q$,  where $\{U_i\}_{i\in I}$ is an open cover of $M$ and where for $i, j\in I$, and for $x \in U_i\cap U_j$, there exists a local diffeomorphism $\phi_{ji}^x$ of $\mathbb{R}^q$ such that $f_j =\phi_{ji}^x \circ f_i$ in a neighborhood $V_x$ of $x$.
Suppose that $J$ is a subset of  $I$, such that for each $i\in J$, the map $f_i: U_i\rightarrow f(U_i)\subset\mathbb{R}^q$ admits a section $s_i: f(U_i)\rightarrow U_i\subset M$, where $f(U_i)$ is open in $\mathbb{R}^q$.
A crucial observation is that
\[
\mathcal{C}:=\{q_i:= \pi \circ s_i
: f_i(U_i)\rightarrow M/\mathcal{F}\}_{i\in J}
\]
is a generating family of the quotient diffeology $\mathcal{D}'$.

Clearly, we have the following commutative diagram.
\begin{equation}\label{f-chart}
\vcenter{
 \xymatrix@=1cm{
  U_i \ar[dr]_{\pi} \ar[r]^{f_i}
                & f_i(U_i)
                \ar[d]^{q_i}  \\
                & M/ \mathcal{F}.             }}
\end{equation}
Note that if there is another map $q_i'$ which satisfies $q_i'\circ f_i=q_i\circ f_i$ on an open subset $W$ of $U_i$, then $q_i=q_i'$ on $f_i(W)$.
In particular, since $q_j\circ f_j=(q_j\circ \phi_{ji}^x) \circ f_i=q_i\circ f_i$ holds on $U_i\cap U_j\neq \emptyset$, we must have that $q_i=q_j\circ \phi_{ji}^x$.
It follows from (\ref{compatibility}) that for a diffeologocal form $\alpha$ on $M/\mathcal{F}$, $\alpha_{q_i}=(\phi_{ji}^x)^*\alpha_{q_j}$, and thus $$
f_i^*\alpha_{q_i}=f_i^* (\phi_{ji}^x)^*\alpha_{q_j}
=(\phi^x_{ji}\circ f_i)^*\alpha_{q_j}=f_j^*\alpha_{q_j}.
$$
Therefore the family of locally defined basic forms $\{f_i^*\alpha_{q_i}\}_{i\in J}$ pieces together to give a globally defined basic form $\pi^*\alpha \in \Omega(M, \mathcal{F})$.
If $\pi^*\alpha=0$, then it follows easily from the construction that $\alpha_{q_i}=0$ for each plot $q_i\in \mathcal{C}$. Since $\mathcal{C}$ is a generating family,
as an easy consequence of (\ref{compatibility}) we see that $\alpha_{f}=0$ for each plot $f$ in $\mathcal{D}'$.
Thus the morphism $\pi^*$ we constructed is injective.

To show that $\pi^*$ is surjective,  we need to show given a basic form $\gamma$ on $M$, there is a diffeological form $\alpha$ such that $\pi^*\alpha=\gamma$.
For a plot $q_i \in \mathcal{C}$, it is easy to see that $\gamma\vert_{U_i}$ descends to a form on its domain $f_i(U_i)$, which we define to be $\alpha_{q_i}$. For an arbitrary plot $f: O\rightarrow M/\mathcal{F}$ that lies in $ \mathcal{D}'$, if it is constant, simply define $\alpha_f=0$. Otherwise we define $\alpha_f$ as follows.
Since $\mathcal{C}$ is a generating family, for $x\in
O$, there exists an open neighborhood $V_x$ of $x$, a plot $q_i\in \mathcal{C}$, and a smooth map $h_i: V_x\rightarrow f_i(U_i)$, such that $f\vert_{V_x}=q_i\circ h_i$. So there is a family of locally defined forms $\{ h_i^*\alpha_{q_i}\}$.
We claim any two such forms would agree with each other on the overlap of their domain.
To see this, it suffices to show that if $h_i:V\rightarrow f_i(U_i)$ and $h_j: V\rightarrow f_j(U_j)$ are two smooth maps such that $q_i\circ h_i=q_j\circ h_j$, then $h_i^*\alpha_{q_i}=h_j^*\alpha_{q_j}$.
First identify $f_i(U_i)$ and $f_j(U_j)$ with transversals in $M$ using sections $s_i$ and $s_j$ respectively.
Then by applying \cite[Lemma 3.3]{HMS11} to the current situation, we see there is a countable family of open subsets $W_s$ in $V$, such that
$\bigcup W_s$ is dense in $V$, and such that on each $W_s$, $h_j=\phi^s\circ h_i$, where $\phi^s$ is a holonomy diffeomophism from an open subset in $ f_i(U_i)$ onto an open subset in $ f_j(U_j)$. Since $\alpha_{q_i}$ and $\alpha_{q_j}$ are induced by basic forms, we get that $h_j^* \alpha_{q_j}=h_i^*(\phi^s)^*\alpha_{q_j}=h_i^*\alpha_{q_i}$ holds on each $W_s$, and so holds on $V$ by a continuity argument.
Therefore we have a well-defined form $\alpha_{f}$ for each plot $f$ in $\mathcal{D}'$. It is straightforward to check that this procedure defines a diffeological form $\alpha$ on $M/\mathcal{F}$ such that $\pi^*\alpha=\gamma$.
This completes the proof of the following result.

\begin{thm}(\cite{HMS11})\label{HMS}
The quotient map $\pi: M\rightarrow M/\mathcal{F}$ induces a well-defined isomorphism of De Rham complexes
\begin{equation}\label{iso1}
\pi^*:( \Omega(M/\mathcal{F}, \mathcal{D}'), d)\longrightarrow ( \Omega(M, \mathcal{F}), d),
\end{equation}
and therefore an isomorphism
$ H(M/\mathcal{F}, \mathcal{D}')\overset{\pi^*}{\cong} H(M, \mathcal{F})$.
\end{thm}

We say that a group $H$ acts on a diffeological space $(X, \mathcal{D})$, if $H$ acts on $X$ as a set, and if for every $h\in H$, the action map
\begin{equation}\label{action}
\phi_{h}:X\longrightarrow X,\,\,\, x\longmapsto h\cdot x
\end{equation}
is smooth.
Note that the action of $H$ on $X$ induces an action of $H$ on $\Omega(X, \mathcal{D})$ as follows:
for any $\alpha\in \Omega(X, \mathcal{D})$, $h\in H$, and $f: U\rightarrow X$ in $\mathcal{D}$, define the differential form $h^*\alpha$ by setting $(h^*\alpha)_f=\alpha_{\phi_{h}\circ f}$.
Let $\pi_H: (X, \mathcal{D}) \rightarrow (X/H, \mathcal{D}_H)$ be the quotient map from
$X$ onto the quotient diffeological space $X/H$, and let $\Omega(X, \mathcal{D})^H$ be the space of diffeological forms on $X$ that is invariant under the action of $H$.
Applying Definition \ref{forms}, it can be shown that for any diffeological form $\alpha\in \Omega(X/H, \mathcal{D}_H)$, its pullback $\pi^*\alpha$ is invariant under the $H$-action.
Thus we have a natural pullback map
\begin{equation}\label{pullback}
\pi^*_H: \Omega(X/H, \mathcal{D}_H)\longrightarrow \Omega(X, \mathcal{D})^H.
\end{equation}
Indeed, we have the following result.

\begin{prop} \label{d-quotient} The pullback map  (\ref{pullback}) is injective.
Moreover, if $G$ is countable, and if there is a topology on $X$ such that the for each $h\in H$, the action map (\ref{action}) is continuous, then (\ref{pullback}) is an isomorphism.

\end{prop}
\begin{proof} The first assertion is an easy consequence of Definition \ref{quotient}.
The second assertion can be shown using the same argument given in the proof of \cite[Lemma 3.3]{HMS11}.
\end{proof}

Finally we will prove a refinement of Theorem \ref{HMS} in the presence of a group action. Suppose that a Lie group $G$ acts freely on a differentiable manifold $M$.
Since the action is free, the Lie algebra action of $\mathfrak{g}:=\text{Lie}(G)$ gives rise to a foliation $\mathcal{F}$, on which the elements of $G$ act as foliation preserving diffeomorphisms.  We will denote by $\Omega_b(M)$ the space of $G$-basic forms on $M$.
Clearly, when $G$ is connected, $\Omega_b(M)=\Omega(M, \mathcal{F})$. However, when $G$ is disconnected, $\Omega(M, \mathcal{F})$ can be a proper subspace of $\Omega_b(M)$.

Let $G_0$ be the connected component of $G$ that contains the identity element.
It is a normal subgroup of $G$; moreover, the quotient group $H:=G/G_0$ is countable.
Note that the action of $G$ on $M$ naturally induces an $H$ action on the leave space $M/\mathcal{F}$, which is continuous with respect to the usual quotient topology.
It is easy to check that the $H$ action also preserves the quotient diffeology on $M/\mathcal{F}$.  A straightforward check of definitions yields the following result.

\begin{cor} \label{eq-variance}
Suppose that $\alpha \in \Omega(M/\mathcal{F}, \mathcal{D}')$, that $g\in G$, and that $h=[g]\in H$ is the image of $g$ under the quotient map $G\rightarrow G/G_0$.
Then we have that
$g^*\pi^*\alpha = \pi^* h^*\alpha$.
As a result, we have a natural isomorphism
\[ \pi^*: \Omega(M/\mathcal{F}, \mathcal{D}')^H\longrightarrow \Omega(M, \mathcal{F})^G.\]
\end{cor}

Denote by $\mathcal{D}_G$ the quotient diffeology on the orbit space $M/G$ that inherits from the quotient map $\pi_G: M\rightarrow M/G$.
Note that there is a natural diffeological diffeomorphism from $((M/\mathcal{F})/H, \mathcal{D}_H)$ to $(M/G, \mathcal{D}_G)$.
Combining Corollary \ref{eq-variance} with Proposition \ref{d-quotient}, we have the following result.

\begin{thm} \label{main-iso}
There is a natural isomorphism of De Rham complexes from $(\Omega(M/G, \mathcal{D}), d)$ onto $(\Omega(M, \mathcal{F})^G, d)$.
\end{thm}

Now suppose that $G$ is a subgroup of a compact Lie group $K$, and that the foliate action of $G$ on $M$ extends to a foliate action of $K$.
Clearly, $\bar{G}$, the closure of $G$, would have to be compact as well; moreover, we have that $(\Omega(M, \mathcal{F})^G=(\Omega(M, \mathcal{F})^{\bar{G}}$.
Using the usual averaging technique, we obtain the following result.
\begin{cor} \label{main-iso2}
Under the above assumptions, there exists a natural isomorphism from  $H^{\ast}(M/G, \mathcal{D}_G)$ to $H^{\ast}(M,\mathcal{F})$.
\end{cor}
%=========================================
\subsection*{Funding}
The second author is partially supported by the National Natural Science Foundation of China (grant number 11701051).

%=========================================
\subsection*{Acknowledgements}
Y. Lin would like to thank the School of Mathematics of Sichuan University for hosting his research visit during the summer of 2017 and 2018 when he was working on things related to this project.
X. Yang would like to thank the Department of Mathematics of Cornell University for providing him an excellent working environment since he joined Cornell University as a visiting scholar in September 2017.
He is also indebted to the China Scholarship Council for the financial support during his visit.
Both authors are grateful to Professor Reyer Sjamaar for many useful discussions.
The authors thank the anonymous referee for many valuable comments and suggestions.

%=======================================================

\end{document}